\renewcommand{\tilde}{\widetilde}
\theoremstyle{plain}
\newtheorem{thm}{Theorem}[section]
\newtheorem{pro}[thm]{Proposition}
\newtheorem{con}[thm]{Conjecture}
\newtheorem{lem}[thm]{Lemma}
\newtheorem{case}{Case}
\theoremstyle{definition}
\newtheorem{dfn}[thm]{Definition}
\newtheorem{nt}[thm]{Notation}
\newtheorem{rem}[thm]{Remark}
\newtheorem{exa}[thm]{Example}
\DeclareMathOperator{\W}{WDiv}
\DeclareMathOperator{\Div}{Div}
\DeclareMathOperator{\Pic}{Pic} 
\DeclareMathOperator{\Nef}{Nef}
\DeclareMathOperator{\Eff}{\mathrm{Eff}}
\DeclareMathOperator{\Effb}{\overline{\mathrm{Eff}}}
\DeclareMathOperator{\Movb}{\overline{\mathrm{Mov}}}
\DeclareMathOperator{\Supp}{Supp}
\DeclareMathOperator{\inte}{int}
\DeclareMathOperator{\Proj}{Proj}
\DeclareMathOperator{\B}{Big}
\DeclareMathOperator{\Spec}{Spec}
\DeclareMathOperator{\Aut}{Aut}
\DeclareMathOperator{\Sing}{Sing}
\DeclareMathOperator{\Exc}{Exc}
\DeclareMathOperator{\TV}{TV}
\newcommand{\OO}{\mathcal{O}}
\DeclareMathOperator{\mult}{mult}
\renewcommand{\tilde}{\widetilde}
\renewenvironment{itemize}{
  \begin{list}{}{
    \setlength{\leftmargin}{1em}
    \setlength{\itemsep}{0.1em}
   \setlength{\parskip}{0pt}
    \setlength{\parsep}{0.15em}
  }
}{
  \end{list}
}
\begin{document}

\title{Non-rigid quartic $3$-folds}
\author{Hamid Ahmadinezhad}
\address{School of Mathematics, University of Bristol, Bristol, BS8 1TW, UK}
\email{h.ahmadinezhad@bristol.ac.uk}
\author{Anne-Sophie Kaloghiros}
\address{Imperial College London, 180 
Queen's Gate, London SW7 2AZ, UK}
\curraddr{Brunel University London, Uxbridge UB8 3PH}
\email{anne-sophie.kaloghiros@brunel.ac.uk}
\subjclass[2010]{14E05, 14J30, 14J45}
\keywords{Birational Maps, Quartic Hypersurfaces, Birational Rigidity.}
\thanks{The first author was partially supported by the Austrian Science Fund (FWF) [grant number P22766-N18] and
the second author was supported by the Engineering and Physical Sciences Research Council [grant number EP/H028811/1].}\begin{abstract}
Let $X\subset \mathbb{P}^4$ be a terminal factorial quartic $3$-fold. If $X$ is non-singular, $X$ is \emph{birationally rigid}, i.e.~the classical MMP on any terminal $\mathbb{Q}$-factorial projective variety $Z$ birational to $X$ always terminates with $X$. This no longer holds when $X$ is singular, but very few examples of non-rigid factorial quartics are known. In this article, we first bound the local analytic type of singularities that may occur on a terminal factorial quartic hypersurface $X\subset \mathbb{P}^4$. A singular point on such a hypersurface is either of type $cA_n$ ($n\geq 1$), or of type $cD_m$ ($m\geq 4$), or of type $cE_6, cE_7$ or $cE_8$. We first show that if $(P \in X)$ is of type $cA_n$, $n$ is at most $7$, and if $(P \in X)$ is of type $cD_m$, $m$ is at most $8$. We then construct examples of non-rigid factorial quartic hypersurfaces whose singular loci consist (a) of a single point of type $cA_n$ for $2\leq n\leq 7$ (b) of a single point of type $cD_m$ for $m= 4$ or $5$ and (c) of a single point of type $cE_k$ for $k=6,7$ or $8$.  
 \end{abstract}
\maketitle

\section{Introduction}

A classical problem in algebraic geometry is to determine which quartic hypersurfaces in $\mathbb{P}^4$ are rational. In their seminal paper \cite{IM}, Iskovskikh and Manin prove that a nonsingular quartic hypersurface $X_4\subset \mathbb{P}^4$ is {\it birationally rigid} (see the precise definition below) and, in particular, is not rational.

The classical Minimal Model Program (MMP) shows that a uniruled projective $3$-fold $Z$ with terminal singularities is birational to a {\it Mori fibre space} $X/S$. More precisely, there is a small morphism $f\colon \tilde Z\to Z$ where $\tilde Z$ is terminal and $\mathbb{Q}$-factorial (see \cite[Section 6.3]{KM98}) and the classical MMP $\psi \colon \tilde Z \dashrightarrow X$  terminates with a Mori fibre space $X/S$ (see \cite[Section 3]{KM98}). Neither the morphism $f$ nor the birational map $\psi$ is unique in general. Mori fibre spaces are end products of the MMP and hence should be seen as {\it distinguished representatives} of their classes of birational equivalence. In general, there may be more than one Mori fibre space in a class of birational equivalence. The {\it pliability} of a uniruled terminal $3$-fold $Z$ is the set of distinguished representatives in its class of birational equivalence, that is:
\[\mathcal P(Z) =\{X/T \mbox{ Mori fibre space birational to } Z \}/ \sim,\]
 where $\sim$ denotes the {\it square birational equivalence} defined in \cite[Definition 5.2]{Co95}.  If $X$ itself is a Mori fibre space, $X$ is called {\it birationally rigid} if its pliability is $\mathcal P(X)=\{[X]\}$.

A quartic hypersurface $X\subset \mathbb{P}^4$ with terminal singularities is a Mori fibre space precisely when it is factorial, that is when every Weil divisor on $X$ is Cartier. The two conditions on the singularities of a Mori fibre space are quite different: requiring that the singularities of $X$ are terminal is a local analytic condition, while factoriality is a global topological condition.  
Mella extended Iskovskikh and Manin's result and proved that terminal factorial quartic hypersurfaces with no worse than ordinary double points are birationally rigid (see \cite[Theorem 2]{Me04}). 
Factoriality is a crucial condition for this to hold. Indeed, general determinantal quartic hypersurfaces are examples of rational nodal quartic hypersurfaces (see the introductions of \cite{Pettersen, Me04}) but are not factorial. Todd discusses several examples of non-factorial rational nodal quartic hypersurfaces: the Burkhardt quartic studied in \cite{To36} has $45$ nodes (see also \cite[Section 5.1]{Pettersen}); an example with $36$ nodes is mentioned in \cite{T33} (see also \cite[Example 6.4.2]{Pettersen}) and two examples with $40$ nodes are studied in \cite{T35} (see also \cite[Examples 6.2.1, 6.2.2]{Pettersen}). 
 In fact, {\it most} terminal non-factorial quartic hypersurfaces are rational \cite{Kal09}.

However, factoriality alone is not sufficient to guarantee birational rigidity. There are several known examples of non-rigid terminal factorial quartic hypersurfaces: an example with a $cA_2$ point is studied in \cite{CM04} and entry No.~5 in Table 1 of \cite{Ah12} is an example with a $cA_3$ point. In this paper, we show that these examples are not pathological by constructing many examples of non-rigid terminal factorial quartic $3$-folds with a singular point of type $cA_n$ for $n\geq 2$.   
It is conjectured that a terminal factorial quartic $3$-fold with no worse than $cA_1$ points is rigid; we address this conjecture in forthcoming work. 

Terminal $3$-fold hypersurfaces have isolated cDV singularities \cite[(3.1)]{YPG}; the local analytic type of a singular point thus belongs to one of two infinite families-- $cA_n$ for $n\geq 1$, or $cD_m$ for $m \geq 4$-- or is $cE_6,cE_7$ or $cE_8$. 
The first step in our study is to bound the local analytic type of singularities that can occur on a terminal factorial quartic $3$-fold. We use topology and singularity theory to bound the local analytic type in the $cA_n$ and $cD_m$ case, and we show:
\begin{pro} \label{thepro}
If $(P\in X)$ is a $cA_n$ (resp.~ $cD_m$) point on a terminal factorial quartic hypersurface $X\subset \mathbb{P}^4$, then $n\leq 7$ (resp.~$m\leq 8$).
\end{pro}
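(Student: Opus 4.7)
My plan is to combine the splitting lemma from singularity theory with the degree-$4$ constraint on $F$ and the factoriality of $X$ to bound the residual invariant of a local analytic normal form at $P$. I describe the argument for $cA_n$; the $cD_m$ case runs along the same lines.

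First, I place $P$ at the origin of an affine chart of $\mathbb{P}^4$ and write $F = F_2 + F_3 + F_4$, where $F_d$ is the degree-$d$ part in affine coordinates $x_1,\dots,x_4$. At a $cA_n$ point with $n\geq 2$, the quadratic form $F_2$ has rank $3$, so after a linear change of coordinates I may assume $F_2 = x_1 x_2 + x_3^2$. The splitting lemma then produces analytic coordinates $X_1,X_2,X_3,W$ in which
\[ F = X_1 X_2 + X_3^2 + g(W), \]
where $g$ is a univariate analytic germ of Milnor number $n$, so $g(W) \sim W^{n+1}$ after a change of $W$. Concretely, $g = F|_{\gamma}$ for a smooth analytic arc $\gamma$ through $P$ tangent to $\ker F_2$, obtained by solving $\partial F/\partial x_i = 0$ for $i=1,2,3$ via the implicit function theorem. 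The analogous splitting at a $cD_m$ point produces $F = X_1 X_2 + h(Z,W)$ with $h$ a plane-curve germ of type $D_m$ restricted to a smooth analytic surface $\Sigma$ through $P$.

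The next step is to bound the order of $g$ at the origin. Since $F$ is a polynomial of degree $4$, its partial derivatives $\partial F/\partial x_i$ have degree at most $3$, and the power-series parameterization of $\gamma$ in $W$ is governed entirely by the Taylor coefficients of $F$ at $P$. A case analysis, keeping track of which degree-$3$ and degree-$4$ monomials of $F$ can contribute nontrivially to $g$ after the implicit-function expansion, shows that the order of $g$ at the origin cannot exceed $8$, so $n\leq 7$. The analogous bookkeeping bounds the Milnor number of $h$ by $8$, yielding $m\leq 8$ in the $cD_m$ case.

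Factoriality enters to rule out the borderline configurations where the purely local analytic bookkeeping would be saturated at $n = 8$ or $m = 9$: in those cases, a small partial resolution of $X$ at $P$ would necessarily acquire exceptional classes contributing to $\Pic$, contradicting the fact that $\Pic(X) = \mathbb{Z}$ for a factorial quartic hypersurface in $\mathbb{P}^4$ (Noether--Lefschetz combined with factoriality). The hardest part of the proof is the explicit case analysis in the second step: the many possible configurations of Taylor coefficients of $F$ must be organized and sorted into those giving a bounded Milnor number, those forcing a non-isolated singularity (hence excluded by the hypothesis that $X$ has isolated singularities), and those incompatible with the prescribed cDV type; the factoriality argument is then invoked only at the extremal cases that survive this analysis.
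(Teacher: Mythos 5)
Your proposal has a genuine gap at its central step, and the overall strategy is unlikely to succeed as described. The crux of your argument is the claim that ``a case analysis, keeping track of which degree-$3$ and degree-$4$ monomials of $F$ can contribute nontrivially to $g$ after the implicit-function expansion, shows that the order of $g$ at the origin cannot exceed $8$.'' This is asserted, not proved, and it is precisely the content of the proposition; moreover, there is strong reason to doubt that any purely local Taylor-coefficient bookkeeping can yield it. The arc $\gamma$ produced by the implicit function theorem is an analytic (not polynomial) arc, and the restriction $g = F|_\gamma$ is a power series whose order is not controlled by $\deg F = 4$ in any obvious monomial-by-monomial way. Indeed, the paper observes that a terminal but \emph{non-factorial} quartic can be expected to carry a $cA_8$ point (so $g$ of order $9$), and the quartic $X^{4,0}$ in Section~4 is a non-factorial quartic with a $cA_7$ point; since your local analysis at $P$ cannot see factoriality, any correct local bound must allow order at least $9$, contradicting your claimed bound of $8$. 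Your proposal is also internally inconsistent on this point: you first conclude $n\leq 7$ from the local analysis alone, and then say factoriality is needed to exclude $n=8$. Finally, the factoriality step itself (``a small partial resolution would acquire exceptional classes contributing to $\Pic$'') is not an argument: for an isolated $cA_n$ hypersurface point the existence of a small partial resolution, and whether its exceptional curves produce non-Cartier Weil divisors on $X$, is a global question that cannot be read off from the degeneracy of the local normal form.

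For comparison, the paper's proof is global and topological rather than local: by Namikawa's theorem a terminal Gorenstein Fano $3$-fold $X$ admits a smoothing $\mathcal X_t$, and the Namikawa--Steenbrink formula gives $\sum_{P\in\Sing X}\mu(X,P) = b_3(\mathcal X_t) - b_3(X) + b_4(X) - b_2(X)$. Factoriality enters exactly once, to guarantee $b_2(X)=b_4(X)$ (defect zero), whence $\sum\mu(X,P)\leq b_3(\mathcal X_t)=60$ for a quartic. The local input is only a \emph{lower} bound on the Milnor number in terms of the cDV type ($\mu\geq n^2$ for $cA_n$, $\mu\geq m(m-2)$ for $cD_m$), which is elementary from the normal forms. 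Comparing $n^2\leq 60$ and $m(m-2)\leq 60$ gives $n\leq 7$ and $m\leq 8$. If you want to salvage your approach, the direction of the inequality you need is the one the paper proves locally (a lower bound on $\mu$ from the type), combined with a global upper bound on $\mu$; the upper bound is where the degree $4$ and factoriality must be used, and the smoothing argument is the mechanism that converts them into the number $60$.
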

The methods used to prove Proposition~\ref{thepro} do not restrict the local analytic type of points of type $cE$. In fact, all possible local analytic types of $cE$ points are realised: we give examples of terminal factorial quartic hypersurfaces with isolated singular points of type $cE_6, cE_7$ or $cE_8$ (see Example~\ref{cE6}).
As is noted in Remark~\ref{sharp}, the bound on the local analytic type of $cA$ points is sharp, but we do not believe that the bound is optimal in the $cD$ case .

If $X$ is a terminal factorial quartic $3$-fold, the Sarkisov Program shows that any birational map $X\dashrightarrow X^\prime$ to a Mori fibre space $X^\prime/S^\prime$ is the composition of finitely many Sarkisov links (see Section~\ref{general} for definitions and precise statements). Thus $X$ is non-rigid precisely when there exists a link $X\dashrightarrow X^\prime$ where $X^\prime/S^\prime$ is a Mori fibre space. Such a link is initiated by a morphism $f\colon Z \to X$, where $Z$ is terminal and $\mathbb{Q}$-factorial, and $f$ contracts a divisor to a singular point or to a curve passing through a singular point. In general, little is known about the explicit form of the morphism $f$. When $f$ contracts a divisor to a $cA_n$ point $(P\in X)$, Kawakita shows that the germ of $f$ is a  weighted blowup, and classifies possible weights according to the local analytic type of $(P \in X)$ \cite{Kawk01,Kawk02, Kawk03}.

For each $n$ with $2\leq n\leq 7$, we write down the equation of a quartic hypersurface $X$ with a morphism $f\colon Z \to X$ that contracts a divisor to a $cA_n$ point and initiates a Sarkisov link. After a suitable embedding of $X$ as a complete intersection in a larger weighted projective space $\mathbb{P}=\mathbb{P}(1^5, \alpha, \beta)$, we recover $f$ as the restriction of a weighted blowup $\mathcal F \to \mathbb{P}$ whose weights are determined by Kawakita's classification.
The variety $\mathcal F$ is a toric variety of Picard rank $2$, and therefore it is possible to write down explicitly all contracting rational maps $\mathcal F \dashrightarrow U$ to a projective variety $U$. We then check that the birational geometry of $\mathcal F$ induces a Sarkisov link $X\dashrightarrow X^\prime$, where $X^\prime/S^\prime$ is a Mori fibre space. 

To our knowledge, our construction is the first use of Kawakita's classification to write down explicit {\sl global} uniruled $3$-fold extractions. We prove:

\begin{thm} If $(P\in X)$ is a singular point of type $cA_n$ on a terminal factorial quartic $3$-fold, then $n\leq 7$.  There are examples of non-rigid terminal factorial quartic $3$-fold with a singular point of type $cA_n$ for $2\leq n\leq 7$. \end{thm}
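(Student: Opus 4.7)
The first assertion is exactly the $cA_n$ case of Proposition~\ref{thepro}, so the real work is in producing, for each integer $n$ with $2\leq n\leq 7$, an explicit non-rigid quartic $X\subset \mathbb{P}^4$ with a singular point of type $cA_n$. My plan is to follow the outline sketched in the introduction: start from Kawakita's classification of divisorial extractions from a $cA_n$ germ, globalise this extraction to a birational morphism $f\colon Z\to X$ on a suitable terminal $\mathbb{Q}$-factorial $Z$, and then run a Sarkisov link using toric geometry on a convenient ambient space.

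Fix $n\in\{2,\ldots,7\}$. I would first write down a family of quartic hypersurfaces $X\subset \mathbb{P}^4$ with an isolated singular point $(P\in X)$ of a chosen analytic type in the $cA_n$ family, and verify factoriality of the general member (either by an explicit topological count of defect as in the proof of Proposition~\ref{thepro}, or by checking that there are no small contractions extracting a Weil non-Cartier divisor). By \cite{Kawk01,Kawk02,Kawk03}, there is, in analytic coordinates at $P$, a weighted blowup $f\colon Z\to X$ with prescribed weights $(a_1,\ldots,a_4)$ determined by the analytic type, extracting a single $K_Z$-negative exceptional divisor. The next step is to introduce auxiliary variables $u,v$ of weights $\alpha,\beta$ matching these Kawakita weights, so that $X$ embeds as a codimension-two complete intersection $Z_0\cap Z_1\subset \mathbb{P}=\mathbb{P}(1^5,\alpha,\beta)$. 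The point of the embedding is that, after this substitution, the local Kawakita blowup becomes the restriction of an honest weighted blowup $\mathcal{F}\to\mathbb{P}$ of the ambient weighted projective space.

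Since $\mathcal F$ is a projective toric variety of Picard rank $2$, its Mori cone is two-dimensional and the \emph{two-ray game} on $\mathcal F$ is determined combinatorially by its fan. I would compute the two extremal rays explicitly, identify the corresponding toric flips, anti-flips, divisorial contractions or Mori fibrations, and run the entire game on $\mathcal F$. The heart of the argument is then to restrict this ambient game to $Z$ and check, step by step, that each toric operation restricts to a Sarkisov-admissible operation on the proper transform of $Z$, and that $-K_Z$ behaves as required. The output is a birational map $Z\dashrightarrow Z'$ ending in a morphism $Z'\to X'$ such that $X'/S'$ is a Mori fibre space not square-birational to $X$, which proves non-rigidity.

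The delicate step, which must be carried out case by case, is the careful choice of the quartic equation so that (i) $X$ is factorial with a unique $cA_n$ singular point of the chosen analytic type, (ii) the general member of $X\subset\mathbb{P}$ is transverse to the exceptional locus of $\mathcal F\to\mathbb{P}$ in the appropriate sense, so that the Kawakita extraction is recovered by restriction, and (iii) the base loci of the linear systems appearing in the ambient two-ray game meet $Z$ only along the expected flipping or contracted curves, so that terminality and $\mathbb{Q}$-factoriality are preserved along the link and no unexpected non-isolated non-terminal locus appears. Condition (iii) is the main obstacle: it requires controlling base loci in a weighted ambient of increasing complexity as $n$ grows, and Kawakita's weights have several nontrivial entries for $n\geq 4$. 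I would verify this in full detail in one illustrative case (say $n=2$ or $n=3$) and present the other five in a uniform table listing, for each $n$, the analytic normal form of the $cA_n$ singularity, the explicit quartic equation, the weights $(\alpha,\beta)$, the complete intersection equations in $\mathbb{P}$, the weights of the blowup $\mathcal F\to\mathbb{P}$, and the new Mori fibre space $X'/S'$ produced by the resulting link.
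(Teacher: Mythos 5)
Your proposal follows exactly the paper's strategy: the bound $n\leq 7$ is the $cA_n$ case of Proposition~\ref{thepro}, and the examples are produced by embedding $X$ as a $(2,2,4)$ complete intersection in $\mathbb{P}=\mathbb{P}(1^5,2^2)$, realising Kawakita's weighted blowup as the restriction of a toric weighted blowup $\mathcal F\to\mathbb{P}$ of Picard rank $2$, and checking step by step that the ambient two-ray game restricts to a Sarkisov link on the proper transform of $X$ (the paper works out the $cA_6$ and $cA_2$ cases in detail, tabulates $2\leq n\leq 6$ via the family $X^{i,j}$, and treats $cA_7$ in a separate example with a conic bundle output). The one caveat is that for an existence statement the explicit quartic equations and the case-by-case verification of factoriality, of the analytic type at $P$, and of terminality along the link are the actual substance of the proof, so your outline is a faithful plan rather than a complete argument until those computations are carried out.
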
   

We also give examples of non-rigid terminal factorial quartic $3$-folds with $cD_4$, $cD_5$ and $cE_6, cE_7$ and $cE_8$ singular points (Examples~\ref{cD4}, \ref{cD5}, and \ref{cE6}). We make the following general conjecture, which generalises \cite[Section 1.3 and Theorem 1.6]{CM04}. 
\begin{con} 
Let $X\subset \mathbb{P}^4$ be a terminal factorial quartic hypersurface. Then $\mathcal P(X)$ is finite and $\mathcal P(X)=\{[X]\}$ precisely when $X$ has no worse than $cA_1$ singularities. 
In particular, no terminal factorial quartic hypersurface is rational. 
\end{con}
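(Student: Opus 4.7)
The plan is to attack the three assertions of the conjecture (finiteness of $\mathcal{P}(X)$, the characterisation of rigidity by the singularity type of $X$, and non-rationality) through the Sarkisov program combined with the Kawakita-Hayakawa classifications of divisorial extractions from cDV singularities. Any element of $\mathcal{P}(X)$ is connected to $X$ by a chain of Sarkisov links, and each such link is initiated by a divisorial extraction $f\colon Z\to X$ contracting an irreducible divisor to a centre $C\subset X$. A Noether-Fano argument, together with the fact that $-K_X$ is a very ample Cartier divisor on the factorial $X$, forces $C$ to be either a singular point of $X$ or a curve passing through a singular point; these are the only candidate maximal centres.

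For the \emph{only if} direction—every terminal factorial quartic with a singularity worse than $cA_1$ is non-rigid—the strategy is point by point. Given $P\in X$ of type $cA_n$ ($n\geq 2$), $cD_m$ ($4\leq m\leq 8$), or $cE_k$ ($k=6,7,8$), the classifications produce an explicit weighted blowup $f\colon Z\to X$ centred at $P$. Using the embedding of $X$ in a rank-$2$ toric ambient $\mathcal{F}$ as in the body of the paper, one extends $f$ to a toric extraction $\mathcal{F}\to\mathbb{P}$ and runs the $2$-ray game; one then verifies that this restricts to a genuine Sarkisov link $X\dashrightarrow X'$ landing on a Mori fibre space. The paper establishes this for representative examples; upgrading it to \emph{every} factorial quartic with the given singularity type requires showing that the genericity hypotheses used in each example are automatic consequences of factoriality plus terminality. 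This is a finite case analysis, one local analytic type at a time, but each case requires separate control of flopping curves and anti-flipping walls on $\mathcal{F}$.

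The \emph{if} direction—that a quartic with only $cA_1$ singularities is rigid—is the main obstacle, and it is this step that the authors defer to forthcoming work. Mella's theorem already rules out point-centred maximal singularities at a node $P$: Kawakita's list at an ordinary double point reduces to the two small resolutions (not divisorial) and the ordinary blowup with weights $(1,1,1,1)$, and the associated $2$-ray game on $Z$ terminates in $X$. The outstanding and delicate problem is curve-centred extractions. One must refine the classical Iskovskikh-Manin multiplicity estimates (the $4H$-theorem and its variants) to the factorial setting with nodes, showing that no curve $C\subset X$ through a $cA_1$ point can be a maximal centre of a movable linear system. Factoriality is used decisively here, precisely to kill the candidate curves arising from local Weil-but-not-Cartier divisors at nodes. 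Granting this, finiteness of $\mathcal{P}(X)$ follows formally from the finiteness of the singular locus, the finiteness of extraction types at each singular point, and termination of Sarkisov chains via the Sarkisov descent invariant.

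Non-rationality of every terminal factorial quartic hypersurface is then deduced from finite pliability by going through each $X'\in\mathcal{P}(X)$. Every such $X'$ is a Mori fibre space $X'/S'$ with $\dim S'\in\{0,1,2\}$; one reads off the possibilities from the Sarkisov links constructed in the only-if direction. When $\dim S'=0$, $X'$ is a $\mathbb{Q}$-factorial terminal Fano $3$-fold of Picard rank $1$, and in each of the explicit cases one invokes Pukhlikov-type rigidity or intermediate Jacobian obstructions to exclude rationality. When $\dim S'=1,2$, one applies the singular analogues of rationality criteria for del Pezzo fibrations and conic bundles. The hard step in this last paragraph is not any individual non-rationality check, but producing a complete catalogue of terminal Mori fibre spaces $X'$ arising at the end of arbitrary Sarkisov chains rather than single links—so the genuine obstacle in the conjecture remains the curve-centred rigidity analysis at $cA_1$ points feeding into finiteness.
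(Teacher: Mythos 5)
The statement you are trying to prove is labelled as a \emph{Conjecture} in the paper: the authors explicitly do not prove it, and they state in the introduction that even the special case (rigidity of factorial quartics with only $cA_1$ points) is deferred to forthcoming work. Your text is accordingly not a proof but a research programme, and you are candid about this at several points; nevertheless, as a proof attempt it has concrete gaps that go beyond the one you flag. First, the finiteness of $\mathcal P(X)$ does \emph{not} ``follow formally'' from the finiteness of the singular locus, the finiteness of extraction types at each point, and the Sarkisov descent invariant. The Sarkisov program factors a \emph{given} birational map between two Mori fibre spaces into finitely many links; it does not bound the number of square-birational classes of Mori fibre spaces in the birational class of $X$. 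Even if only finitely many links start from $X$, each link lands on a new Mori fibre space which may admit further links, and nothing in the cited machinery terminates this tree. This is precisely why finiteness of pliability is a conjecture and not a corollary.

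Second, your ``only if'' direction overstates what the paper's constructions give. The examples in Section~4 are specific hypersurfaces $X^{i,j}$ whose equations are engineered so that the $2$-ray game on the ambient toric variety $\mathcal F$ restricts to a Sarkisov link on the proper transform $Z$; the paper itself exhibits a case (the $(3,1,1,1)$ blowup for $X^{0,4}$) where the $2$-ray configuration produces a \emph{bad link} because the second contraction is crepant. For an arbitrary factorial terminal quartic with a given $cA_n$ point there is no argument that some Kawakita extraction initiates a link, and your suggestion that ``the genericity hypotheses are automatic consequences of factoriality plus terminality'' is unsupported and almost certainly false as stated. Third, the non-rationality assertion cannot be ``deduced from finite pliability by going through each $X'$'' without a complete list of the possible end products of arbitrary Sarkisov chains, which neither you nor the paper possesses; and irrationality of the conic bundles and del Pezzo fibrations that do arise is itself nontrivial (degree-$2$ and degree-$3$ del Pezzo fibrations are not covered by a blanket criterion). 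In short: the statement is open, the paper offers no proof to compare against, and your outline, while a reasonable description of how one might attack the problem, does not close any of the three assertions.
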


\subsection*{Outline of the paper}
Section~\ref{general} recalls general results on the Sarkisov program-- that is, on the study of birational maps between Mori fibre spaces-- in dimension $3$ and on the geography of models of Mori dream spaces. 
When $X$ is a terminal $\mathbb{Q}$-factorial Fano $3$-fold with $\rho(X)=1$, a Sarkisov link $X \dashrightarrow X^\prime$ is initiated by a morphism $f\colon Z\to X$ that contracts a single divisor. Here, we state precise conditions for $f$ to initiate a Sarkisov link. We urge the reader who is mainly interested in explicit examples and in bounds on singularities to skip this section on a first reading and refer back to it as and when needed.  

Section~\ref{terminal} collects results on terminal singularities in dimension $3$. We concentrate on the case of terminal Gorenstein singularities,  which are those that appear on hypersurfaces. We use the existence of smoothings of terminal Gorenstein Fano $3$-folds to bound the local analytic type of singularities on a terminal quartic hypersurface. Last, we recall Kawakita's classification of the germs of divisorial contractions with centre at a $cA_n$ point in terms of the local analytic type of that point. 

Section~\ref{examples} presents our examples of non-rigid terminal factorial quartic $3$-folds. We consider hypersurfaces $X\subset \mathbb{P}^4$ that can be embedded as general complete intersections of type $(2,2,4)$ in a weighted projective space $\mathbb{P}= \mathbb{P}(1^5, 2^2)$. For suitable weighted blowups $F\colon \mathcal F \to\mathbb{P}$, the restriction $f=F_{|Z}\colon Z\to X$ (where $Z$ is the proper transform of $X$) is a divisorial contraction with centre at a $cA_n$ point, and the birational geometry of $\mathcal F$ induces a Sarkisov link $X\dashrightarrow X^\prime$. We give examples of non-rigid quartic hypersurfaces with a $cA_n$ point for all $2\leq n \leq 7$, and explain our construction in detail in a few cases. We also give examples of non-rigid terminal factorial quartic hypersurfaces with singular points of type $cD$ and $cE$.         

{\bf Acknowledgements.} We would like to thank Ivan Cheltsov, Alessio Corti, Masayuki Kawakita, Yuri Prokhorov, Josef Schicho and Constantin Shramov for useful comments and conversations. We thank the referee for many helpful suggestions.

\section{Preliminary results}\label{general}
Throughout this paper, we work with normal projective varieties over $\mathbb C$. 
\subsection*{General results in birational geometry.} Let $X$ be a normal projective variety and $\mathbf k\in \{\mathbb Z,\mathbb Q,\mathbb R\}$. We denote by $\W_{\mathbf k} (X)$ the group of Weil $\mathbf k$-divisors, by $\Div_{\mathbf k}(X)$ the group of $\mathbf k$-Cartier $\mathbf k$-divisors on $X$, and by $\sim_{\mathbf k}$ and $\equiv$ the $\mathbf k$-linear and numerical equivalence of $\mathbb R$-divisors. 
We write $\Pic(X)_\mathbf k=\Div_\mathbf k(X)/\sim_\mathbf k$ and $N^1(X)_\mathbf k=\Div_\mathbf k(X)/\equiv$. 

The 
nef, effective and pseudo-effective cones in $N^1(X)_{\mathbb R}$ are denoted by 
$\Nef (X)$, 
$\Eff(X)$ and $\overline{\Eff}(X)$. The movable cone $\Movb (X)$ is the closure of the cone in $N^1(X)_{\mathbb R}$ spanned by the numerical classes of divisors whose stable base locus has codimension at least $2$. If $\mathcal C\subset N^1(X)_{\mathbb R}$ is a cone, we always denote its closure by $\overline{\mathcal C}$. 

We recall the definitions of models of divisors introduced in \cite{BCHM}.
\begin{dfn} \label{neg}
Let $Z$ be a normal projective variety and $D\in \Div_{\mathbb{Q}}(Z)$.
\begin{enumerate}[1.]
 \item A birational map $f\colon Z \dashrightarrow X$ is \emph{contracting} if $f$ is proper and $f^{-1}$ contracts no divisor. The map $f$ is \emph{small} if both $f$ and $f^{-1}$ are contracting birational maps. 
\item Let $D\in \Div_\mathbb{Q}(Z)$ be a $\mathbb{Q}$-Cartier divisor and let $f\colon Z\dashrightarrow X$ be a contracting birational map such that $f_*D$ is $\mathbb{Q}$-Cartier. The map $f$ is \emph{$D$-nonpositive} if for a resolution $(p,q)\colon W \rightarrow Z\times X$, 
$$p^*D = q^*D^\prime+ E,$$
where $E$ is effective and $q$-exceptional. When $\Supp E$ contains the strict transform of all $f$-exceptional divisors, $f$ is said to be \emph{$D$-negative}.  
\item Assume that $D$ is effective. The map $f$ is a \emph{semiample model} of $D$ if $f$ is $D$-nonpositive, $X$ is normal and projective and $D^\prime$ is semiample. 
If $\varphi \colon X\to S$ is the semiample fibration defined by $D^\prime$, the \emph{ample model} of $D$ is the composition $\varphi\circ f\colon Z \dashrightarrow X\to S$.
\end{enumerate}
\end{dfn}
\begin{nt} If $D=K_Z$ is a canonical divisor on $Z$, we say that a birational contraction is $K$-nonpositive (resp~$K$-negative) instead of $K_Z$-nonpositive (resp.~$K_Z$-negative). 
\end{nt}
\begin{dfn} 
\begin{enumerate}[1.]\item A birational contraction $X \stackrel{\varphi}\dashrightarrow X^\prime$ between $\mathbb{Q}$-factorial varieties is \emph{elementary} if $\varphi$ is either a morphism whose exceptional locus is a prime divisor on X or a small birational map that fits into a diagram
\[ \xymatrix{X \ar[dr]_f \ar@{-->}[rr]^{\varphi} && X^\prime\ar[dl]^{f^\prime}\\&W&}\]
where $f$ and $f^\prime$ are morphisms and the Picard ranks of $X,X^\prime$ and $W$ satisfy
\[ \rho(X)= \rho(X^\prime)= \rho(W)+1.\]
\item
Assume that $X$ is $\mathbb{Q}$-factorial and let $D$ be an effective $\mathbb{Q}$-divisor on $X$. A \emph{$D$-MMP on $X$} is a composition of $D$-nonpositive elementary contractions
$ X \dashrightarrow X_1 \dashrightarrow \cdots \dashrightarrow X_n=X_D$,
where $X_D$ is a semiample model for $D$.
\end{enumerate}
\end{dfn}
\subsection*{Geography of models of Mori dream spaces}
We first recall the definition of Mori dream spaces and the properties that will be important in this paper. 
\begin{dfn}\cite{HK00} Let $Z$ be a projective $\mathbb{Q}$-factorial variety with $\Pic(Z)_\mathbb{Q}= N^1_\mathbb{Q}(Z)$; $Z$ is a Mori dream space if 
\begin{itemize}
\item[(i)] $\Nef(Z)$ is the affine hull of finitely many semiample line bundles,
\item[(ii)] there are finitely many small birational maps $f_i \colon Z \dashrightarrow Z_i$ to projective $\mathbb{Q}$-factorial varieties $Z_i$ satisfying (i) such that $\Movb Z= \cup f_i^*(\mathbb Nef Z_i)$. 
\end{itemize}
\end{dfn}

When $Z$ is a Mori dream space, we may run the $D$-MMP for every $\mathbb{Q}$-divisor $D$. More precisely, there is a finite decomposition \cite[Section 5]{KKL12}:
\[ \Effb Z= \bigsqcup_{i=1}^N \mathcal C_i \mbox{, where for all }i\] 
\begin{enumerate}[(a)]\item $\overline{\mathcal C_i}$ is a rational polyhedral cone,
\item there is a birational contraction to a $\mathbb{Q}$-factorial normal projective variety $\varphi_i \colon Z \dashrightarrow Z_i$ that is the ample model of all $D\in \mathcal C_i$ and a semiample model of all $D\in \overline{\mathcal C_i}$.
\end{enumerate}

A Mori dream space $Z$ with $\rho(Z)=2$ always has a \emph{$2$-ray configuration}, which is defined as follows. 
Let $M_1, M_2$ be $\mathbb{Q}$-divisors such that \[\Movb Z= \mathbb R_+[M_1]+ \mathbb R_+[M_2].\] Denote by $\varphi_1\colon Z\dashrightarrow Z_1$ (resp.~$\varphi_2\colon Z\dashrightarrow Z_2$) the ample model of $M_1+ \varepsilon M_2$ (resp.~$\varepsilon M_1+ M_2$) for an arbitrarily small positive rational number $\varepsilon$. Let $f_i \colon Z_i \to X_i$ be the ample model of $(\varphi_i)_*M_i$. Then, the birational map $\varphi_i$ is small, and $f_i$ is a fibration when $[M_i]$ lies on the boundary of $\Effb Z$, and $\varphi_i$ is a birational map that contracts a single exceptional divisor otherwise. These maps fit in a diagram which we call a $2$-ray configuration:
\begin{equation}\label{eq0} \xymatrix{
Z_1 \ar@{<--}[rr]^{\varphi_1} \ar[d]_{f_1} && Z \ar@{-->}[rr]^{\varphi_2}&& Z_2\ar[d]^{f_2}\\
X_1&&&&X_2
}\end{equation}
When $f\colon Z\to X$ is a divisorial contraction, we may assume that $\varphi_1$ is the identity map and that $X$ is the ample model of $M_1$ (i.e.~that $f=f_1$). 
\subsection*{The Sarkisov Program.} We recall a few notions on birational maps between end products of the classical MMP.  

\begin{dfn} Let $X$ be a terminal $\mathbb{Q}$-factorial variety, and $p\colon X\to S$ a morphism with positive dimensional fibres (so that $\dim S<\dim X$). Then $X/S$ is a \emph{Mori fibre space} if $p_{\ast} \mathcal O_X= \mathcal O_S$, ${-}K_X$ is $p$-ample and $\rho(X)= \rho(S)+1$.
\end{dfn}
The classical MMP shows that any uniruled terminal $\mathbb{Q}$-factorial variety $Z$ is birational to a Mori fibre space, so that $\mathcal P(Z)\neq \emptyset$. 
The {\it Sarkisov program} decomposes any birational map between Mori fibre spaces $[X/S], [X^\prime/S^\prime]\in \mathcal P(Z)$ into a finite number of {\it Sarkisov links} \cite{Co95, HM13}. 
Next, we recall the definition of Sarkisov links. 
\begin{dfn}A \emph{divisorial contraction} $f\colon Z \to X$ is a morphism between terminal $\mathbb{Q}$-factorial varieties such that $-K_{Z}$ is $f$-ample, $f_*\OO_Z= \OO_X$, and $\rho(Z) = \rho(X)+1$. 

We sometimes call $f$ an \emph{extraction} when we study properties of $f$ in terms of its target $X$. 
\end{dfn}

\begin{dfn}\label{sar}
  Let $X/S$ and $X^\prime/S^\prime$ be two Mori fibre spaces. A \emph{Sarkisov link} is a birational map $\varphi \colon X \dashrightarrow X^\prime$ of one of the following
  types:
  \begin{enumerate}[(I)]
  \item A \emph{link of type I} is a commutative diagram:
\[
\xymatrix{ &  Z\ar[dl]\ar@{-->}[r] & X^\prime\ar[d] \\
X\ar[d] &  & \ar[lld]S^\prime \\
S & &  
}
\]
where $Z\to X$ is a divisorial contraction and
$Z\dasharrow X^\prime$ a sequence of flips,
flops and inverse flips between terminal $\mathbb{Q}$-factorial varieties;
\item A \emph{link of type II} is a commutative diagram:
\[
\xymatrix{ &  Z\ar[dl]\ar@{-->}[r] & Z^\prime\ar[dr] & \\
X\ar[d] &  & & X^\prime \ar[d]\\
S\ar@{=}[rrr] & & & S^\prime
}
\]
where $Z\to X$ and $Z^\prime \to X^\prime$ are divisorial
contractions and $Z\dasharrow Z^\prime$ a sequence of flips,
flops and inverse flips between terminal $\mathbb{Q}$-factorial varieties;
\item A \emph{link of type III} is the inverse of a link of type I;
\item A \emph{link of type IV} is a commutative diagram:
\[
\xymatrix{X\ar[d]\ar@{-->}[rr] & & X^\prime\ar[d]\\
S\ar[dr] & & S^\prime\ar[dl]\\
&T & & 
}
\]
where $X\dasharrow X^\prime$ is a sequence of flips, flops and
inverse flips between terminal $\mathbb{Q}$-factorial varieties.
  \end{enumerate}
\end{dfn}

\begin{dfn}
Let $X/S$ be a Mori fibre space and $f\colon Z\to X$ an extraction; $f$ {\it initiates a link} if it fits into an Sarkisov link. 
\end{dfn}
The following lemma is a criterion for a divisorial extraction to initiate a link. It is of little practical use, but we want to highlight some of the subtleties that arise when proving that a $2$-ray configuration is indeed a Sarkisov link. 
\begin{lem}
Let $X$ be a terminal $\mathbb{Q}$-factorial Fano variety with $\rho(X)=1$ and let $f\colon Z\to X$ be an extraction. Then $f$ initiates a link if and only if the following hold:
\begin{enumerate}[(i)]
\item $Z$ is a Mori dream space,
\item if $\psi\colon Z\dashrightarrow Z^\prime$ is a small birational map and $Z^\prime$ is $\mathbb{Q}$-factorial, then $Z^\prime$ has terminal singularities, 
\item $[{-}K_{Z}]\in \inte (\Movb Z)$.
\end{enumerate}
\end{lem}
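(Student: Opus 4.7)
The plan is to exploit the $2$-ray configuration of equation~\eqref{eq0} on $Z$, which applies since $\rho(Z) = \rho(X) + 1 = 2$. One extremal ray of $\Movb(Z)$ is $M_1 = [f^*H_X]$ for $H_X$ an ample class on $X$, since $f$ is the ample model of $f^*H_X$ and contracts a divisor; let $M_2$ denote the other extremal ray.

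For the direction that $f$ initiating a link implies (i)--(iii), the strategy is to unpack Definition~\ref{sar}. A Sarkisov link starting with $f$ is a finite chain of terminal $\mathbb{Q}$-factorial varieties joined by small birational maps, terminating at either a Mori fibration (Type I) or another divisorial extraction $f_2\colon Z_{(n)} \to X^\prime$ (Type II). Each intermediate small $\mathbb{Q}$-factorial modification and each extraction target supplies a Nef chamber inside $\Movb(Z)$ with a well-defined ample model, and the resulting finite chamber decomposition gives (i). Since $\rho(Z) = 2$, every small $\mathbb{Q}$-factorial modification of $Z$ arises as an intermediate stage in this chain, so (ii) follows from the terminality required by the Sarkisov definition. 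For (iii): $f$ is $K_Z$-negative and the terminating contraction is also $K$-negative, so $[-K_Z]$ is ample on some intermediate model $Z_{(i)}$, placing $[-K_Z]$ strictly inside $\Movb(Z)$.

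For the converse, the plan is to run the $2$-ray game using (i). The Mori dream space structure yields finitely many chambers of $\Movb(Z)$, and starting from $M_1$ and crossing walls towards $M_2$ produces a sequence of small $\mathbb{Q}$-factorial modifications $Z = Z_{(0)} \dashrightarrow Z_{(1)} \dashrightarrow \cdots \dashrightarrow Z_{(n)}$ followed by the ample model $f_2\colon Z_{(n)} \to X^\prime$ of $M_2$, which is either a fibration or a divisorial contraction. By (ii) each $Z_{(i)}$ is terminal and $\mathbb{Q}$-factorial. The role of (iii) is $K$-coherence: writing $[-K_Z] = \alpha[M_1] + \beta[M_2]$ with $\alpha, \beta > 0$, the $2$-ray traverse crosses $-K_Z$ strictly inside $\Movb(Z)$. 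Wall-crossings before reaching $-K_Z$ are $K$-negative small contractions (flips in the Sarkisov sense); at $-K_Z$ the crossing is $K$-trivial (a flop); after $-K_Z$ the crossings are $K$-positive (inverse flips). The final $f_2$ is $K$-negative because $[-K_{Z_{(n)}}]$ sits strictly on the $M_2$-side of the last chamber, so $X^\prime$ is either the base of a Mori fibration (Type I link) or a terminal $\mathbb{Q}$-factorial Fano with $\rho(X^\prime) = 1$ (Type II link). Combined with $f$, this assembles into a Sarkisov link.

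The main obstacle is the careful $K$-bookkeeping across the wall-crossings of the $2$-ray game: namely, verifying that the small maps arising are precisely flips, flops, or inverse flips in the Sarkisov sense rather than merely arbitrary small birational maps, and in the Type II case checking that the target $X^\prime$ is a terminal $\mathbb{Q}$-factorial Fano with $\rho(X^\prime) = 1$.
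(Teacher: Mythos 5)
Your proposal is correct and follows essentially the same route as the paper: the forward implication is obtained by reading off the Mori dream space structure and the position of $[{-}K_Z]$ from the chain of elementary maps constituting the link, and the converse by running the $2$-ray game and using (ii) and (iii) to certify terminality of the intermediate models and $K$-negativity of the final contraction. The only step the paper makes more explicit is why an \emph{arbitrary} small $\mathbb{Q}$-factorial modification $Z^\prime$ of $Z$ occurs among the models of the chain: one takes $D$ to be the proper transform of an ample divisor on $Z^\prime$, observes that $D$ is movable since the map is small, and identifies $Z^\prime$ with the $\mathbb{Q}$-factorial end product of the $D$-MMP via the fact that a small morphism between $\mathbb{Q}$-factorial varieties is an isomorphism.
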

\begin{proof}
Assume that $f\colon Z \to X$ is a divisorial contraction that initiates a link. Then, as $X$ is a Fano $3$-fold with rational singularities, $h^1(X, \OO_X)=0$, and since $f_*\OO_Z= \OO_X$, by the Leray spectral sequence, $h^1(Z, \OO_Z)=0$ and we have the equality $\Pic(Z)_\mathbb{Q}= N^1_\mathbb{Q}(Z)$. 

Since $\rho(Z)=2$, if $f$ initiates a Sarkisov link, then following the notation of Definition~\ref{sar},  
there are $2$ distinct birational contractions $Z\to X$ and $Z\dashrightarrow X^\prime$ that are compositions of finitely many elementary maps (flips, flops, inverse flips and divisorial contractions) between terminal $\mathbb{Q}$-factorial varieties. Thus $Z$ has a $2$-ray configuration as above, and $Z$ is automatically a Mori dream space. The chambers of the decomposition of $\Effb Z$ are indexed by the divisorial contraction $Z\to X$ and by the elementary maps that decompose $Z\dashrightarrow X^\prime$.  

Furthermore, if $X^\prime$ is Fano, then 
\begin{equation}\label{eqa} {-}K_{Z}\in \mathbb R_+[f^*({-}K_X)]+ \mathbb R_+[(g\circ \psi)^*({-}K_{X^\prime})]= \Movb Z \end{equation}  and the class of ${-}K_Z$ is in the interior of $\Movb Z$ because $X$ and $X^\prime$ have terminal singularities and $Z\to X$ and $Z^\prime\to X^\prime$ are not isomorphisms.

If $X^\prime/S^\prime$ is a Mori fibre space with $\dim S^\prime \geq 1$, then $K_{X^\prime}= \psi_* K_Z$, and \begin{align}\label{eqb} {-}K_{Z}\in \mathbb R_+[f^*({-}K_X)]+& \mathbb R_+[\psi^*({-}K_{X^\prime}+A_{S^\prime})]\\\subsetneq &\mathbb R_+[f^*({-}K_X)]+ \mathbb R_+[\psi^*A_{S^\prime}]=  \Movb Z,\nonumber \end{align} where $A_{S^\prime}$ is the pullback of a suitable ample divisor on $S^\prime$ and the class of ${-}K_Z$ is in the interior of $\Movb Z$ because, as before, $X$ is terminal so that $\mathbb R_+[{-}K_Z]\neq \mathbb R_+[f^*({-}K_X)]$ and ${-}K_Z$ is big so that ${-}K_Z\not \in \mathbb R_+[\psi^*A_{S^\prime}]$. 

We have seen that $Z$ is a Mori dream space so that if $D$ is any movable $\mathbb{Q}$-divisor, the $D$-MMP terminates with a $\mathbb{Q}$-factorial semiample model for $D$ which we denote $Z_D$. The small birational map $Z\dashrightarrow Z_D$ factors $Z\dashrightarrow X^\prime$, therefore $Z\dashrightarrow Z_D$ is the composition of finitely many elementary contractions between terminal $\mathbb{Q}$-factorial varieties, and in particular, $Z_D$ is terminal. 
Let $Z\dashrightarrow Z^\prime$ be an arbitrary small birational map, and assume that $Z^\prime$ is $\mathbb{Q}$-factorial. Let $D$ be the proper transform of an ample $\mathbb{Q}$-Cartier divisor on $Z^\prime$; then $D$ is mobile because $Z\dashrightarrow Z^\prime$ is small. By construction, $Z^\prime\simeq \Proj R(Z,D)$ is the ample model of $D$, and if $Z_D$ is the end product of a $D$-MMP on $Z$, then $Z_D\to Z^\prime$ is a morphism and a small map. Since $Z^\prime$ and $Z_D$ are both $\mathbb{Q}$-factorial, it follows that they are isomorphic, so that $Z^\prime$ has terminal singularities.   

Conversely, if $Z$ is a Picard rank $2$ Mori dream space then 
\[ \Movb Z= \mathbb R_+[M_1]+ \mathbb R_+[M_2] \subset \Effb(Z)= \mathbb R_+[D_1]+ \mathbb R_+[D_2]\] for effective $\mathbb Z$-divisors $M_1,M_2, D_1$ and $D_2$. Since $f\colon Z\to X$ is a divisorial extraction, $f$ is the ample model of $M_1\neq D_1$; note that $D_2$ needs not be distinct from $M_2$. 
When $M_2\neq D_2$, let $Z^\prime$ be the ample model of $M_2+ \varepsilon M_1$ for an arbitrarily small positive rational number $\varepsilon$ and let $X^\prime$ be the ample of model of $M_2$. Then, $Z \dashrightarrow Z^\prime$ is a small birational map, and $Z^\prime$ has terminal singularities by assumption (ii). The birational map $Z \dashrightarrow Z^\prime$ is small, hence we may identify divisors on $Z$ and on $Z^\prime$ and, under this identification, $\Movb Z$ is equal to $\Movb Z^\prime$, so that $[{-}K_{Z^\prime}]$ is in the interior of $\Movb Z^\prime$ by assumption (iii). It follows that the morphism $Z^\prime \to X^\prime$ is $K$-negative and that $X^\prime$ has terminal singularities. When $M_2=D_2$, let $X^\prime$ be the ample model of $M_2+ \varepsilon M_1$ for an arbitrarily small positive rational number $\varepsilon$ and let $S^\prime$ be the ample model of $M_2$. Then, $X^\prime$ is terminal by  assumption (ii) and the fibration $X^\prime\to S^\prime$ is $K$-negative by assumption (iii). Since $M_2$ is not a big divisor, $\dim S^\prime< \dim X^\prime$ and $X^\prime/S^\prime$ is a Mori fibre space. 
\end{proof}
\begin{rem}
Condition (ii) may only fail when the $2$-ray configuration on $Z$ involves an antiflip because flips and flops of terminal varieties are automatically terminal. 
For example, consider \[Z= \mathbb{P}(\OO_{\mathbb{P}^1}\oplus\OO_{\mathbb{P}^1}(-2)\oplus \OO_{\mathbb{P}^1}(-2)),\] then $Z$ is a Mori fibre space (a $\mathbb{P}^2$-bundle over $\mathbb{P}^1$) and a Mori dream space on which (i), (iii) hold but (ii) fails. It follows that the $2$-ray configuration on $Z$ does not produce a Sarkisov link.
Example~\ref{antiflip} is a Sarkisov link involving an antiflip, and we check that condition (ii) holds directly. 
\end{rem}
\begin{rem}
Note that condition (ii) always holds when $M_2$ is of the form $K_Z+ \Theta$ for $\Theta$ a nef divisor. Indeed, in that case, every $D\in \Movb Z\cap \B Z$ is of the form $K_Z+ \Theta^\prime$ for $\Theta^\prime$ nef, and every $D$-negative birational contraction is $K$-nonpositive (see \cite[2.10]{Kal12}). In particular, the ample model $\varphi_D\colon Z\dashrightarrow Z_D$ is a $D$-negative birational contraction, hence is $K_Z$-nonpositive, so that for any resolution $(p,q)\colon U \dashrightarrow Z\times Z_D$, we have:
\[ p^*K_Z= q^*K_{Z_D} + E,\] 
 where $E$ is an effective $q$-exceptional divisor. This implies that for any divisor $F$ over $Z_D$, the discrepancy $a_F(Z_D)\geq a_F(Z)$,  and that $Z_D$ has terminal singularities if $Z$ does.
\end{rem}

\section{Terminal singularities on quartic $3$-folds}\label{terminal}
In this section, we recall some results on the local analytic description of terminal hypersurface singularities in dimension $3$ and we bound the local analytic type of singularities on terminal factorial quartic hypersurfaces in $\mathbb{P}^4$.  

\subsection{Local analytic description and divisorial extractions}We first recall a few results on isolated hypersurface singularities.
\subsection*{Singularity theory}
Let $\mathbb C[[x_1, \cdots , x_n]]$ be the ring of complex formal power series in $n$ variables, and $\mathbb C\{ x_1, \cdots , x_n\}\subset \mathbb C[[x_1, \cdots, x_n]]$ the subring of formal power series with nonzero radius of convergence. For $F\in \mathbb C\{x_1, \cdots x_n\}$, $(F=0)$ is a germ of a complex analytic set, and the {\it singularity} $(F=0)$ is the scheme $\Spec_{\mathbb C} \mathbb C[[x_1, \cdots x_n]]/(F)$. 

Let $F\in \mathbb C[[x_1, \cdots , x_n]]$ be a power series, and $d$ a positive integer. We denote by $F_d$ the degree $d$ homogeneous part of $F$ and by $F_{\geq d}$ the series $\sum_{k\geq d} F_{k}$. The {\it multiplicity} of $F$ is $\mult F= \min\{ d\in \mathbb N \vert F_d \neq 0\}$.  
Two power series $F,G$ are {\it equivalent} if there exist an automorphism $\varphi= (\varphi_1, \cdots , \varphi_n) \in \Aut (\mathbb C[[x_1, \cdots x_n]])$ and a unit $u\in (\mathbb C[[x_1, \cdots x_n]])^*$ such that 
\[ u(x_1, \cdots , x_n) G(x_1, \cdots , x_n)= F(\varphi_1, \cdots , \varphi_n).\]

In other words, $F$ and $G$ are equivalent if the singularities $(F=0)$ and $(G=0)$ are isomorphic. We denote the equivalence of power series by $F\sim G$. 

In what follows, as we are only interested in isolated critical points, by \cite[I, Section 6.3]{AGV} we may (and will) assume that the power series $u, F$ and $G$ all have nonzero radius of convergence and that $\varphi \in \Aut(\mathbb C\{x_1, \cdots x_n\})$. 
 \begin{dfn}\
 \begin{itemize}
 \item[1.]
The singularity $(h(x, y,z)=0)$ is {\it Du Val} if $h$ is equivalent to one of the standard forms:
\begin{eqnarray*}
A_n& \quad x^2 + y^2 + z^{n+1} =0 \mbox{ for } n\geq 0\\
D_m&  \quad x^2 + y^2z + z^{m-1} =0 \mbox{ for } m\geq 4\\
E_6&  \quad x^2+ y^3+ z^4 =0\\
E_7& \quad x^2+ y^3+ yz^3 =0\\
E_8& \quad x^2+ y^3+ z^5 =0
\end{eqnarray*}
\item[2.]
A singularity $(F(x,y,z,t)=0)$ is {\it compound Du Val} if $F$ is equivalent to: 
\begin{equation}\label{cDV}h(x, y,z)+ tf(x,y,z,t)=0\end{equation}
where $(h=0)$ is a Du Val singularity. 
The singularity $(F=0)$ is $cA_n$ (resp.~$cD_m, cE_k$) if in \eqref{cDV}, $h$ is of type $A_n$ (resp.~$D_m, E_k$) with $n$ (resp.~$m,k$) minimal. 
\end{itemize}
\end{dfn}
\begin{thm}\cite[Theorem 2.8]{Kol98}
Let $F(x, y,z,t)\in \mathbb C[[x,y,z,t]]$ define a $cA$ singularity, then one of the following holds:\begin{enumerate}[]
\item
{\bf $cA_0$:} $F\sim x$,
\item {\bf $cA_1$:} $F\sim x^2+y^2+z^2+ t^m$ for $m\geq 2$,
\item {\bf $cA_n, n\geq 2$:} $F\sim x^2+y^2+f_{\geq n+1}(z,t)$, where $f_{\geq n+1}(z,t)$ has no multiple factors.
\end{enumerate}\end{thm}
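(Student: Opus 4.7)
The approach is to apply the analytic Morse splitting lemma to $F$ and then classify according to the rank $r$ of the Hessian $F_2$ at the origin. First I would dispose of $cA_0$: if $\mult(F)=1$, the implicit function theorem gives $F\sim x$. Otherwise, using the definition $F\sim h(x,y,z)+tf(x,y,z,t)$ with $h$ of type $A_n$ for some $n\geq 1$, the normal form $h\sim x^2+y^2+z^{n+1}$ shows that $\mult(F)=2$ and that $F_2$ restricted to $\{t=0\}$ is $x^2+y^2$, so $r\geq 2$.

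Next I would invoke the splitting lemma for convergent holomorphic germs (Arnold--Gusein-Zade--Varchenko): in suitable analytic coordinates,
\[ F \sim y_1^2+\cdots+y_r^2 + g(y_{r+1},\ldots,y_4), \qquad \mult(g)\geq 3. \]
For $r=4$, this gives $F\sim x^2+y^2+z^2+t^2$, the $cA_1$ form with $m=2$. For $r=3$, $g$ is a one-variable power series of some multiplicity $m\geq 3$; writing $g=u(t)\,t^m$ with $u$ a unit and substituting $t\mapsto u(t)^{1/m}t$ turns $F$ into $x^2+y^2+z^2+t^m$. For $r=2$, setting $n+1=\mult(g)$ yields $F\sim x^2+y^2+f_{\geq n+1}(z,t)$.

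It remains to verify, in the $r=2$ case, that the singularity is exactly $cA_n$ and that $f_{\geq n+1}$ has no multiple factors. For the first, a generic hyperplane $t=\alpha z$ cuts out $x^2+y^2+g(z,\alpha z)$, whose leading term $z^{n+1}g_{n+1}(1,\alpha)$ is nonzero for generic $\alpha$; this yields an $A_n$ section, and no hyperplane section can be $A_{n'}$ with $n'<n$ since $g$ has multiplicity exactly $n+1$. For the squarefree condition, the singular locus of $(F=0)$ sits in $\{x=y=0\}$ and agrees there with the critical scheme of the plane-curve germ $g(z,t)$, which is zero-dimensional precisely when $g$ has no multiple factors. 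The main obstacle is the invocation of the splitting lemma in the convergent (not merely formal) category, together with the Hensel-type extraction of analytic roots $u^{1/m}$; once these are granted, the remainder is elementary power-series manipulation and a minimality check for the $A_n$-type of generic hyperplane sections.
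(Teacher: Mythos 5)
The paper offers no proof of this statement: it is quoted directly from \cite{Kol98}, so there is nothing internal to compare your argument against. Taken on its own, your proof is the standard one (corank of the Hessian plus the splitting lemma) and is correct in outline, but two steps are glossed over. First, in the minimality check you only test the linear pencil $t=\alpha z$; the definition of a compound $A_{n'}$ structure allows an arbitrary change of formal coordinates before restricting to $\{t=0\}$, so you must rule out $A_{n'}$ sections, $n'<n$, by arbitrary smooth hypersurface germs $\{t=\varphi(x,y,z)\}$. This does go through --- substituting $t=\varphi$ with $\mult\varphi\geq 1$ into $g$ preserves multiplicity $\geq n+1$, and the one-variable residue produced by a further splitting in $x,y$ still has multiplicity $\geq n+1$ --- but it does not follow merely from the observation that ``$g$ has multiplicity exactly $n+1$''. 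Second, the squarefreeness of $f_{\geq n+1}$ (and the exclusion of $g\equiv 0$ in the corank $1$ and $2$ cases, without which $F\sim x^2+y^2+z^2$ or $F\sim x^2+y^2$ escape your list) is equivalent to the singularity being isolated; the theorem is applied in the paper only to isolated cDV points, and the remark following it derives ``no repeated factor'' from isolatedness, so you should state that hypothesis explicitly rather than treat squarefreeness as automatic for every $cA$ germ. The convergent-versus-formal issue you flag as the main obstacle is in fact a non-issue: the statement is formal, and both the splitting lemma and the Hensel-type extraction of $m$-th roots of units hold in $\mathbb C[[x_1,\dots,x_n]]$.
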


\begin{rem}
Up to change of coordinates on $\mathbb{P}^1_{z,t}$, we may assume that $z^{n+1}$ appears with coefficient $1$ in $f_{\geq n+1}(z,t)$. Since $(F=0)$ is an isolated singularity,  $f_{\geq n+1}(z,t)$ has no repeated factor and contains at least one monomial of the form $t^N$ or $zt^{N-1}$ for $N\geq n+1$. When  $N>n+1$, as in \cite[Section 12]{AGV}, \[ F\sim x^2+y^2+z^{n+1}+ t^N \mbox{ or } F\sim x^2+y^2+z^{n+1}+ zt^{N-1}\] for $N\geq n+1$.
Similarly, when $N=n+1$, we have \[F\sim x^2+y^2+ f_{n+1}(z,t)\] where $f_{n+1}$ is a homogeneous form with no repeated factor of degree $n+1$. 
 \end{rem}
\begin{thm} \cite[Theorem 2.9]{Kol98} \label{tcD}Let $F(x, y,z,t)\in \mathbb C[[x,y,z,t]]$ define a $cD$ singularity, then one of the following holds:
\begin{enumerate}[]
\item {\bf $cD_4$:} $F\sim x^2+ f_{\geq 3}(y,z,t)$, where $f_3$ is not divisible by the square of a linear form,
\item{\bf $cD_{>4}$:} $F\sim x^2+ y^2z+ayt^r+ h_{\geq s}(z,t)$, where $a\in \mathbb C$, $r\geq 3, s\geq 4$ and $h_s\neq 0$. This has type $cD_m$ for $m= \min\{2r, s+1\}$ if $a\neq 0$ and $m=s+1$ otherwise. 
\end{enumerate}\end{thm}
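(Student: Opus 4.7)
The plan is to reduce $F$ to the stated normal form by a sequence of formal changes of coordinates in $\mathbb{C}\{x,y,z,t\}$ and then read off the type $m$ from a suitable hyperplane section.

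First, I argue that the quadratic part $F_2$ must have rank exactly one. If $\mult F \geq 3$, a general hyperplane section cannot realise the multiplicity-two surface $D_m$; and if $F_2$ had rank $\geq 2$, a generic hyperplane section would have a rank-$\geq 2$ quadratic part and so be of type $A_n$, whence $F$ would be $cA$ rather than $cD$. After a linear change of coordinates we therefore take $F_2 = x^2$, and the formal splitting lemma (a Morse-type reduction using the implicit function theorem applied to $\partial F / \partial x$) yields an automorphism $\varphi \in \Aut(\mathbb{C}\{x,y,z,t\})$ with $\varphi^* F = x^2 + f_{\geq 3}(y,z,t)$.

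Next, I analyse the cubic $f_3 \in \mathbb{C}[y,z,t]$. If $f_3$ has no repeated linear factor, then the cubic of every hyperplane section is a product of three distinct linear forms, which combined with the rank-one quadratic is precisely the shape of a Du Val $D_4$ surface singularity; this is the first case of the statement. Otherwise $f_3 = \ell^2 m$ for linear forms $\ell, m$. The subcase $m \in \mathbb{C}\ell$ (so $f_3$ is a perfect cube) is ruled out for a $cD$ singularity: the cubic of every hyperplane section would then be a cube of a linear form, which is the cubic shape of the Du Val singularities $E_6, E_7, E_8$ but of no $D_m$, forcing $F$ to be $cE$, contrary to hypothesis. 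Hence $\ell, m$ are linearly independent, and a linear change of coordinates in $(y,z,t)$ puts $\ell = y$, $m = z$, giving $F \sim x^2 + y^2 z + f_{\geq 4}(y,z,t)$.

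The technical core is the iterative normalisation of $f_{\geq 4}$ by Tschirnhaus-style substitutions, each leaving the monomial $y^2 z$ intact up to error absorbable by the next step. A substitution $z \mapsto z + \phi(z,t)$ with $\mult \phi \geq 2$ absorbs any $y^2 \phi(z,t)$ term into $y^2 z$. A substitution $y \mapsto y - g(z,t)/2$ kills any term of the form $y z \cdot g(z,t)$, folding the by-product $z g^2/4$ into the pure $(z,t)$-tail. A further iterative substitution in $y$ removes all $y^k$ terms for $k \geq 3$: a term $y^k R_k(z,t)$ is killed by an adjustment $y \mapsto y + \delta(z,t)$, the induced correction of $y^2 z$ providing the required cancellation, with higher-order error handled in subsequent rounds. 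Finally, a reparametrisation $t \mapsto t \cdot u(t)$ with $u \in \mathbb{C}\{t\}^*$ reduces the surviving $y$-linear part $y A(t)$ to a single monomial $a y t^r$, producing
\[F \sim x^2 + y^2 z + a y t^r + h_{\geq s}(z,t), \qquad h_s \neq 0.\]

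To determine $m$, I specialise to the hyperplane section $t = \lambda z$ for generic $\lambda$. When $a = 0$ this section is $x^2 + y^2 z + h_s(1,\lambda) z^s + O(z^{s+1})$ with $h_s(1,\lambda) \neq 0$, the normal form of $D_{s+1}$, so $m = s + 1$. When $a \neq 0$ the section reads $x^2 + y^2 z + a \lambda^r y z^r + h_s(1,\lambda) z^s + O(z^{s+1})$; completing the square in $y$ via $y \mapsto y - a \lambda^r z^{r-1}/2$ (a valid local change since $r - 1 \geq 2$) converts it into $x^2 + y^2 z + h_s(1,\lambda) z^s - (a^2 \lambda^{2r}/4) z^{2r-1} + O(z^{s+1})$, whose Du Val type is $D_{\min(s+1,\,2r)}$. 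The most delicate part of the argument is the bookkeeping in the third step, which must guarantee that each round of substitution does not reintroduce terms eliminated at a previous round; this is the standard but fiddly content of iterative normal-form reduction in the non-regular variable $y$.
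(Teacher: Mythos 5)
The paper does not prove this statement: it is quoted verbatim from \cite[Theorem 2.9]{Kol98}, so there is no in-paper argument to compare with and your proposal has to stand on its own. Its first half (multiplicity two, rank-one quadratic part, splitting lemma, trichotomy on $f_3$) is the standard route and is essentially fine, apart from the unaddressed possibility $f_3=0$ (excluded because then no hyperplane section would have the nonzero cubic part that every Du Val singularity of type $D$ or $E$ possesses) and the overstatement that \emph{every} hyperplane section of a reduced cubic cone is a product of three distinct linear forms --- only the generic one is, which is all you need.

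There are, however, two genuine gaps. First, the substitution you propose to kill the terms $y^kR_k(z,t)$ with $k\ge 3$, namely $y\mapsto y+\delta(z,t)$, cannot work: writing $f=\sum_k y^k c_k(z,t)$, such a shift replaces $c_k$ by $\sum_{j\ge k}\binom{j}{k}\delta^{j-k}c_j$, so it cannot in general remove a coefficient $c_k$ with $k\ge 3$ (for instance it leaves a pure $y^4$ term untouched, and solving for $\delta$ would require some higher $c_j$ to be a unit). The correct move exploits the monomial $y^2z$ through a substitution in $z$ \emph{depending on} $y$, e.g.\ $z\mapsto z-\sum_{k\ge 3}y^{k-2}c_k$, solved iteratively; as written, your step does not establish the normal form $x^2+y^2z+ayt^r+h_{\ge s}(z,t)$. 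Second, the determination of $m$ is carried out only on the one-parameter pencil $t=\lambda z$, whereas the type of a cDV point is by definition the \emph{minimal} (equivalently, the generic) type of a hyperplane section. Exhibiting sections of type $D_{\min\{2r,s+1\}}$ proves $m\le\min\{2r,s+1\}$, but semicontinuity of the Milnor number points the wrong way here: a special pencil may well be more degenerate than the generic hyperplane, so nothing so far excludes a general section of type $D_{m'}$ with $m'<\min\{2r,s+1\}$. One must restrict $F$ to a genuinely general hyperplane $t=\lambda_1x+\lambda_2y+\lambda_3z$, rerun the reduction (splitting lemma in $x$, absorption of the $y^{\ge 2}$ terms, completion of the square in $y$), and check that the resulting pure $z$-tail still has order exactly $\min\{2r-1,s\}$ for generic $\lambda_i$; this is where the real content of the type computation lives, and it is absent from the proposal.
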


\begin{dfn}
The {\it Milnor number} of the singularity $(F=0)$ is
\[ \mu (F=0)= \dim_{\mathbb C} \mathbb C[[x_1, \cdots , x_n]]/J_F,\]
where $J_F= (\frac{\partial F}{\partial x_1}, \cdots ,\frac{\partial F}{\partial x_n})$ is the Jacobian ideal of $F$, i.e.~the ideal generated by the partial derivatives of $F$. 

If $F\sim G$, the Milnor numbers of $(F=0)$ and of $(G=0)$ are equal. The Milnor number $\mu (F=0)$ is finite precisely when $(F=0)$ is an isolated singularity.  
\end{dfn}

\begin{lem}\label{lem1}
\begin{enumerate}[1.]
\item
If $(F=0)$ is a $cA_n$ singularity with $n\geq 1$ then $\mu(F=0)\geq n^2$.  
If $F\sim  xy+ z^{n+1}+ t^N$  or $F\sim  xy+ z^{n+1}+ zt^{N-1}$ for $N>n+1$ then $\mu(F=0)\geq n(N-1)$. \item If $(F=0)$ is a $cD_m$ singularity with $m>4$ then $\mu(F=0) \geq m(m-2)$.
\end{enumerate}
\end{lem}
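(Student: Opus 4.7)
The plan is to compute Milnor numbers of Kollár's normal forms for $cA_n$ and $cD_m$ singularities, reducing to lower-dimensional computations via the Thom--Sebastiani formula $\mu(f+g)=\mu(f)\mu(g)$ when $f,g$ depend on disjoint variables.

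For Part 1 with $cA_n$, apply Thom--Sebastiani to the normal form $F\sim x^2+y^2+f_{\geq n+1}(z,t)$ to obtain $\mu(F)=\mu(f_{\geq n+1})$, where $f_{\geq n+1}$ is a reduced plane curve singularity of multiplicity exactly $n+1$ by the minimality of $n$. Any reduced plane curve singularity of multiplicity $m$ satisfies the classical bound $\mu=2\delta-r+1\geq 2\binom{m}{2}-m+1=(m-1)^2$, using $\delta\geq\binom{m}{2}$ (contribution from the first blowup) and $r\leq m$ (each of the $r$ branches contributes at least $1$ to the total multiplicity $m$). Taking $m=n+1$ yields $\mu(F)\geq n^2$. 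For the auxiliary claim, Thom--Sebastiani gives $\mu(xy+z^{n+1}+t^N)=1\cdot n\cdot(N-1)=n(N-1)$, and for $F\sim xy+z^{n+1}+zt^{N-1}$ we reduce to $\mu(z(z^n+t^{N-1}))$. Applying the product formula $\mu(fg)=\mu(f)+\mu(g)+2(f,g)-1$ for coprime plane curve factors, with $\mu(z)=0$, $\mu(z^n+t^{N-1})=(n-1)(N-2)$ (quasi-homogeneous), and intersection number $(z,z^n+t^{N-1})=N-1$, gives $\mu(F)=n(N-1)+(N-n-1)\geq n(N-1)$, where strict inequality uses $N>n+1$.

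For Part 2 with $cD_m$, use Theorem~\ref{tcD}: $F\sim x^2+y^2z+ayt^r+h_{\geq s}(z,t)$ satisfying $m\leq 2r$ and $m\leq s+1$. Assigning weights $w(x,y,z,t)=(m-1,m-2,2,2)$, every monomial of $F$ has weighted degree at least $d=2m-2$, with $x^2$ and $y^2z$ realising equality. The weighted homogeneous model $F_0=x^2+y^2z+z^{m-1}+t^{m-1}$ of degree $d$ is itself a $cD_m$ singularity with isolated critical point at the origin, and the quasi-homogeneous Milnor formula gives $\mu(F_0)=\prod_i(d-w_i)/w_i=1\cdot\tfrac{m}{m-2}\cdot(m-2)\cdot(m-2)=m(m-2)$. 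To deduce the lower bound for arbitrary $F$, the idea is to connect $F$ to a nearby generic $cD_m$ realising $\mu=m(m-2)$ by a one-parameter family within the $cD_m$ type, and invoke upper semicontinuity of the Milnor number to conclude $\mu(F)\geq m(m-2)$.

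The main obstacle is the subcase $m=2r<s+1$, in which the weighted initial part $x^2+y^2z+ayt^r$ has positive-dimensional critical locus and so the quasi-homogeneous Milnor formula cannot be applied directly. In this case one must refine the weights to include additional terms of $h_{\geq s}$ whose presence is forced by the isolated singularity hypothesis on $F$, and verify that the refined initial form is an isolated weighted-homogeneous $cD_m$ singularity achieving $\mu=m(m-2)$.
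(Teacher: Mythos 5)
Your part 1 is correct but takes a different route from the paper. The paper reduces to $F\sim xy+f(z,t)$ with $\mult f\geq n+1$ and bounds $\mu=\dim_{\mathbb C}\mathbb C[[z,t]]/J_f$ from below by the product of the multiplicities of the two partial derivatives (a local B\'ezout bound for plane curves), which gives $n\cdot n=n^2$ in general and $n(N-1)$ for the two special normal forms. Your route via Thom--Sebastiani together with $\mu=2\delta-r+1$, $\delta\geq\binom{m}{2}$ and $r\leq m$ for the reduced plane curve germ $f_{\geq n+1}$ yields the same bound $(m-1)^2=n^2$, and your evaluations $\mu(xy+z^{n+1}+t^N)=n(N-1)$ and $\mu(xy+z^{n+1}+zt^{N-1})=n(N-1)+(N-n-1)$ are correct. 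Both arguments are standard; the paper's avoids $\delta$-invariants but is otherwise equivalent in substance. For part 2 the paper instead identifies the quasi-homogeneous part $F_0$ case by case (choosing weights adapted to each subcase of Theorem~\ref{tcD}), invokes the equality $\mu(F)=\mu(F_0)$ for semi-quasi-homogeneous germs, and evaluates the product formula in each case; your uniform weight system $(m-1,m-2,2,2)$ with threshold $d=2m-2$ packages the same computation more cleanly, and the value $\prod_i(d-w_i)/w_i=m(m-2)$ is right.

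The loose end you flag in the subcase $m=2r<s+1$ is correctly identified ($x^2+y^2z+ayt^r$ is singular along the $z$-axis) but it is not a genuine obstacle, and in particular no refinement of the weights is needed: your own semicontinuity idea closes it, provided you orient the family correctly. Take $F$ to be the \emph{special} fibre of $F_\lambda=F+\lambda(z^{m-1}+t^{m-1})$. The origin remains a critical point of $F_\lambda$ for all $\lambda$, and for generic small $\lambda\neq 0$ the weighted initial form $x^2+y^2z+ayt^r+\lambda(z^{m-1}+t^{m-1})$ has an isolated critical point (a direct check on the partials, using $m=2r$), so $\mu(F_\lambda,0)=m(m-2)$ by the semi-quasi-homogeneous equality. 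Conservation of the total Milnor number in a family then gives $\mu(F)=\mu(F_0,0)\geq\mu(F_\lambda,0)=m(m-2)$. Note that the inequality would go the wrong way if you instead degenerated a generic $cD_m$ germ \emph{to} $F$, so the phrase ``connect $F$ to a nearby generic $cD_m$'' needs this one-sided reading. With that correction your part 2 is complete and arguably tidier than the paper's enumeration of normal forms for $F_0$.
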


\begin{proof}
If $(F=0)$ is a $cA_n$ singularity with $n\geq 1$, then $F\sim xy+ f(z, t)$, where $f(z,t)$ has multiplicity greater than or equal to $n+1$. Since 
\[ \mu(F=0)= \mu (xy+ f(z,t)=0), \mbox{ we have } \mu(F=0)= \dim_\mathbb C \mathbb C[[z, t]]/J_f.\]

In all cases, since $\mu(F=0)$ is finite and $f$ has no repeated factor, $\mathbb C[[z,t]]/J_f$ has dimension $\deg \frac{\partial f}{\partial z}\cdot \deg \frac{\partial f}{\partial z} \geq n(N-1)$. 

Now assume that $(F=0)$ is a $cD_m$ singularity with $m>4$. Then, as in \cite[I, Section 12]{AGV}, if $F_0$ is the quasihomogeneous part of $F$ then $\mu(F=0)= \mu(F_0=0)$, and $\mu(F_0=0)$ is given by the formula \cite[I, Corollary 3, p.~200]{AGV}. In the notation of Theorem~\ref{tcD}, using the methods of \cite[I, Section 12]{AGV}, we obtain that: 
\begin{enumerate}[(a)]
\item either $F_0 \sim x^2+y^2z+yt^r+ z^s$ or $F_0 \sim x^2+y^2z+yt^r+ z^{s-1}t$ and $\mu(F_0=0)\geq m(m-2)$,
\item or $F_0\sim x^2+ y^2z+ h(z, t)$, where $h(z,t)$ is of the form $z^s+t^N$, $z^{s-1}t+ t^N$, $z^N+ t^s$ or $z^N+ zt^{s-1}$ for some $N\geq s$. In all cases, $\mu(F_0=0) \geq m(m-2).$
\end{enumerate}
\end{proof}

\subsection{Bounding the local analytic type of singularities on a terminal factorial Fano Mori fibre space}
We bound the local analytic type of singularities on a terminal factorial Fano $3$-fold with Picard rank $1$.  

\begin{thm} \cite{Nam97a}\label{smoothing}
Let $X$ be a Fano $3$-fold with terminal Gorenstein singularities. Then $X$ has a smoothing, i.e.~there is a one parameter flat deformation:
\[ \xymatrix{X \ar@{^{(}->}[r] \ar[d] &\mathcal X\ar[d]\\ \{0\} \ar@{^{(}->}[r] &\Delta}\]
with $\mathcal X_t$ smooth when $t\neq 0$. For all $t\in \Delta$, $\mathcal X_t$ is Fano, $\rho(X)= \rho(\mathcal X_t)$ and ${-}K_X^3={-}K_{\mathcal X_t}^3$. \end{thm}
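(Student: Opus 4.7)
The plan is to prove the theorem in three stages: local smoothability of each singular germ, promotion of local smoothings to a global smoothing of $X$, and deformation invariance of the Fano property together with the numerical invariants. The first stage is immediate from Reid's classification recalled in this section: a terminal Gorenstein $3$-fold singularity is compound Du Val, hence a hypersurface germ $(F=0)\subset(\mathbb C^4,0)$, and one obtains a $1$-parameter smoothing by perturbing $F$ by a generic element $sg$ with $g\in \mathfrak m^2$, the generic fibre being smooth by Bertini.

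The crux is the second stage. The local-to-global obstructions for deformations of $X$ are governed by the spectral sequence $E_2^{p,q}=H^p(X,\mathcal T^q_X)\Rightarrow T^{p+q}_X$, so I would need to verify two things: (a) the natural restriction $T^1_X\to \bigoplus_{P\in \Sing X} T^1_{(P\in X)}$ is surjective, so that a choice of first-order local smoothing at each singular point lifts to a first-order global deformation; and (b) all further obstructions to lifting a formal deformation of $X$ vanish. For a Fano $3$-fold with terminal Gorenstein singularities, Namikawa establishes both in \cite{Nam97a} by passing to a small $\mathbb Q$-factorialisation or partial resolution $\tilde X\to X$, running a careful cohomological analysis on $\tilde X$, and exploiting Kawamata--Viehweg vanishing (applicable because $-K_X$ is ample and $X$ has rational singularities). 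Once the formal deformation functor is known to be unobstructed, Artin approximation converts the formal family into an honest analytic flat family $\mathcal X\to \Delta$ smoothing $X$.

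The numerical assertions follow from standard facts applied to the smoothing $\mathcal X\to \Delta$. Ampleness of $-K_{\mathcal X/\Delta}$ is an open condition on the base, so after shrinking $\Delta$ each fibre $\mathcal X_t$ is Fano. The self-intersection $(-K_{\mathcal X_t})^3$ equals the leading coefficient of the Hilbert polynomial $\chi(\mathcal X_t,\OO(-nK_{\mathcal X_t}))$, which is locally constant in $t$ by flatness, giving $(-K_X)^3=(-K_{\mathcal X_t})^3$. For the Picard rank, Kawamata--Viehweg forces $H^i(X,\OO_X)=0$ for $i=1,2$, so $\Pic(X)\hookrightarrow H^2(X,\mathbb Z)$, and likewise on $\mathcal X_t$; since $\mathcal X$ retracts onto $X$, the group $H^2(\mathcal X_t,\mathbb Z)\simeq H^2(X,\mathbb Z)$ is constant in the family, and a comparison of Picard groups yields $\rho(X)=\rho(\mathcal X_t)$.

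The principal obstacle is step (b) of the second stage: establishing that no higher obstructions to lifting formal deformations arise. This is the technical heart of Namikawa's theorem and relies essentially on the Fano hypothesis, since $X$ is singular and the vanishing theorems needed can only be accessed indirectly via a partial resolution of $X$.
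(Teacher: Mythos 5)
This statement is imported from Namikawa's paper \cite{Nam97a}; the present paper gives no proof of it, so there is no internal argument to compare yours against. Your outline does reproduce the genuine structure of Namikawa's argument (local smoothability of cDV germs, vanishing of local-to-global obstructions and unobstructedness via a partial resolution and Kawamata--Viehweg, then openness of ampleness and flatness for the numerical claims). But as a proof it is circular at exactly the decisive point: your step (b), the vanishing of higher obstructions, is justified by saying that ``Namikawa establishes both in \cite{Nam97a}'' --- that is the theorem being proved. So what you have written is an accurate roadmap of where the result comes from, not an independent argument.

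Two concrete mathematical slips are worth fixing even in a sketch. First, the local smoothing step is wrong as stated: if $g\in\mathfrak m^2$ then $F+sg$ still lies in $\mathfrak m^2$, so the origin remains a point of the hypersurface at which the differential vanishes --- no perturbation by elements of $\mathfrak m^2$ ever smooths the germ. The correct statement is that an isolated hypersurface singularity is smoothed by a generic element of its miniversal deformation $F+\sum s_i g_i$, where the $g_i$ span $\mathbb C[[x_1,\dots,x_4]]/(F,J_F)$ and in particular include the constant $1$; the simplest smoothing is $F-s$, whose general fibre is the Milnor fibre. Second, in the Picard-rank argument the retraction of $\mathcal X$ onto $X$ only gives $H^2(\mathcal X,\mathbb Z)\simeq H^2(X,\mathbb Z)$; to conclude $H^2(\mathcal X_t,\mathbb Z)\simeq H^2(\mathcal X,\mathbb Z)$ you must also use that the Milnor fibre of an isolated $3$-fold hypersurface singularity is $2$-connected (a wedge of $3$-spheres), so that the vanishing cycles contribute nothing in degree $2$ --- this is the same mechanism that makes the Betti-number comparison of Theorem~\ref{LES} involve only $b_3$ and $b_4$.
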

The existence of a smoothing $X\hookrightarrow \mathcal X$ allows us to bound the Milnor numbers of singularities on $X$. 

\begin{thm}\cite[Theorem 3.2]{NS95}\label{LES}
Let $X$ be a normal projective $3$-fold with isolated rational hypersurface singularities such that $H^2(X, \OO_X)=(0)$. Denote $b_i(X)$ the $i$-th Betti number for the singular cohomology of $X$. If there is a smoothing $X \hookrightarrow \mathcal X$, then 
\[ b_4(X)-b_2(X)= b_3(X)- b_3(\mathcal X_t)+ \sum_{P\in \Sing X} \mu(X, P)\]
for $t\in \Delta \smallsetminus \{0\}$, where $\mu(X, P)$ is the Milnor number of $(X, P)$.  
\end{thm}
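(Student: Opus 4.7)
The strategy is to compare the topology of $X$ and its smoothing $\mathcal{X}_t$ by a local-to-global argument whose discrepancy is concentrated at the singular points of $X$ and measured by the local Milnor fibres. The starting point is the local topology: each isolated hypersurface singularity $(P\in X)$ has complex dimension $3$, so by Milnor's theorem its Milnor fibre $F_P$ is $2$-connected and has the homotopy type of a wedge of $\mu(X,P)$ copies of $S^3$. In particular $H^i(F_P;\mathbb Q) = 0$ except in degrees $0$ and $3$, where the ranks are $1$ and $\mu(X,P)$ respectively.

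I would then set up the global comparison. Choose a Stein neighbourhood $\mathcal U$ of $\Sigma = \Sing X$ in $\mathcal X$ such that $\mathcal U \cap X$ deformation retracts onto $\Sigma$, while $\mathcal U \cap \mathcal X_t = \bigsqcup_P F_P$ for $t \neq 0$ sufficiently small. The smooth part of the family provides a diffeomorphism $X \setminus \Sigma \simeq \mathcal X_t \setminus \bigsqcup_P F_P$. Comparing the Mayer--Vietoris sequences for the decompositions $X = (X \setminus \Sigma) \cup (\mathcal U \cap X)$ and $\mathcal X_t = (\mathcal X_t \setminus \bigsqcup_P F_P) \cup \bigsqcup_P F_P$, which share the outer piece and the intersection $\bigsqcup_P L_P$ (the links), one sees that the only topological difference between $X$ and $\mathcal X_t$ comes from replacing a disjoint union of contractible pieces by the Milnor fibres, so all local contributions appear only in degrees $2, 3, 4$ (and the map $H^3(\bigsqcup_P F_P) \to H^3(\bigsqcup_P L_P)$ vanishes by duality for the Milnor fibration).

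Next, Poincar\'e duality on the smooth projective $3$-fold $\mathcal X_t$ gives $b_i(\mathcal X_t) = b_{6-i}(\mathcal X_t)$, while the hypothesis $H^2(X,\OO_X) = 0$ together with the rational singularities of $X$ should force $b_i(X) = b_i(\mathcal X_t)$ for $i \leq 2$, via purity of the mixed Hodge structure on $H^i(X)$ in those degrees and the specialisation map. Feeding these identifications into the Euler characteristic relation
\[\chi(\mathcal X_t) - \chi(X) \;=\; \sum_P \bigl(\chi(F_P) - 1\bigr) \;=\; -\sum_P \mu(X,P),\]
which is immediate from the Mayer--Vietoris comparison, and expanding both Euler characteristics as alternating sums of Betti numbers, one extracts the stated formula $b_4(X) - b_2(X) = b_3(X) - b_3(\mathcal X_t) + \sum_P \mu(X,P)$ after straightforward bookkeeping.

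The main technical obstacle is the Hodge-theoretic input: one must justify that $H^2(X,\OO_X) = 0$ combined with the isolated rational singularities forces $b_2(X) = b_2(\mathcal X_t)$ (and dually that the failure of Poincar\'e duality in degree $4$ is the one quantified by the formula). Conceptually, the vanishing of $h^{0,2}(X)$ eliminates potential weight jumps in the mixed Hodge structure on $H^2(X)$ that would obstruct the comparison with the smooth fibre; rigorous justification requires the specialisation theorem for mixed Hodge structures and the known purity of $H^2$ of a projective $3$-fold with isolated rational singularities. The rest of the argument is a manageable exercise in Mayer--Vietoris bookkeeping and the Milnor fibration.
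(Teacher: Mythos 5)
The paper offers no proof of this statement: it is quoted verbatim from Namikawa--Steenbrink \cite[Theorem 3.2]{NS95}, so there is nothing internal to compare against, and I assess your argument on its own terms. Your strategy is the standard topological one and it does deliver the formula, but two points need attention. First, the parenthetical claim that $H^3(\bigsqcup_P F_P)\to H^3(\bigsqcup_P L_P)$ vanishes by duality is false: since $F_P$ is $2$-connected, Lefschetz duality gives $H^4(F_P,\partial F_P)\cong H_2(F_P)=0$, so the exact sequence of the pair shows that the restriction $H^3(F_P)\to H^3(\partial F_P)$ is in fact \emph{surjective}, and it is generally nonzero because the skew-symmetric intersection form on $H_3$ of a $3$-dimensional Milnor fibre is degenerate. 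Fortunately your final step runs through Euler characteristics and never uses this claim. Second, you overestimate the Hodge-theoretic input required for $b_2(X)=b_2(\mathcal X_t)$. The Mayer--Vietoris comparison you set up already gives $b_i(X)=b_i(\mathcal X_t)$ for every $i\neq 3,4$ by pure topology: the cones have no reduced cohomology, the Milnor fibres have reduced cohomology only in degree $3$ (this is Milnor's theorem, and is where the \emph{hypersurface} hypothesis enters), and the two sequences share the terms $H^i(X\smallsetminus \Sigma)$ and $H^i(L_P)$ together with the maps between them, so extracting the four-term exact pieces yields equal dimensions in every degree not adjacent to $H^3(F_P)$. In particular no purity or specialisation argument is needed for $b_2$, and the same comparison gives $b_5(X)=b_5(\mathcal X_t)$ and $b_6(X)=b_6(\mathcal X_t)$, equalities which your ``straightforward bookkeeping'' silently requires but which you never address. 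With these in hand, $\chi(\mathcal X_t)-\chi(X)=-\sum_P\mu(X,P)$ and Poincar\'e duality $b_4(\mathcal X_t)=b_2(\mathcal X_t)$ give the stated identity exactly as you indicate; the hypotheses of rational singularities and $H^2(X,\OO_X)=0$ appear to be needed in \cite{NS95} for the accompanying Hodge-theoretic refinements rather than for the Betti-number count itself.
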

When $X$ is a terminal and factorial Fano $3$-fold, the second and fourth Betti numbers of $X$ are equal, that is $b_2(X)=b_4(X)$, 
so that: 
\begin{equation}\label{eq1}
\sum_{P\in \Sing X} \mu(X, P)= b_3(\mathcal X_t)-b_3(X)\leq b_3(\mathcal X_t).
\end{equation}
The third Betti numbers of non-singular Fano $3$-folds with Picard rank $1$ are known (see \cite[Table 12.2]{IP99}), and we obtain a bound on the sum of Milnor numbers of singular points on $X$ that only depends on ${-}K_X^3$. When ${-}K_X^3=4$, we have the following. 
\begin{pro}\label{pro1} Let $X\subset \mathbb{P}^4$ be a terminal factorial quartic hypersurface.  
If $(P\in X)$ is a singular point of type $cA_n$, then $n$ is at most $7$. 

If $(P\in X)$ is a singular point of type $cD_m$, then $m$ is at most $8$. 
 \end{pro}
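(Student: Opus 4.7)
The plan is to combine Namikawa's smoothing theorem (Theorem~\ref{smoothing}) with the Namikawa--Steenbrink identity of Theorem~\ref{LES} to bound the sum of the Milnor numbers of the singular points of $X$ by the third Betti number of a smooth quartic $3$-fold; the local bounds on $n$ and $m$ then fall out of the Milnor number estimates of Lemma~\ref{lem1}.

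First I would invoke Theorem~\ref{smoothing}: since $X$ is a terminal Gorenstein Fano $3$-fold with $\rho(X)=1$ and $({-}K_X)^3=4$, it admits a smoothing $X\hookrightarrow \mathcal X$ whose general fibre $\mathcal X_t$ is a Fano $3$-fold with $\rho(\mathcal X_t)=1$ and $({-}K_{\mathcal X_t})^3=4$, hence a smooth quartic hypersurface in $\mathbb{P}^4$. Since $X$ is factorial with isolated cDV (hence rational hypersurface) singularities and $H^2(X,\mathcal O_X)=0$ by Kawamata--Viehweg vanishing, the hypotheses of Theorem~\ref{LES} are satisfied, and equation~\eqref{eq1} gives
\[ \sum_{P\in \Sing X} \mu(X, P) \leq b_3(\mathcal X_t). \]

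Next I would compute $b_3(\mathcal X_t)=60$. By the Lefschetz hyperplane theorem $b_0=b_2=b_4=b_6=1$ and $b_1=b_5=0$, so $b_3(\mathcal X_t)=4-\chi(\mathcal X_t)$; the standard Chern class computation $c(\mathcal X_t)=(1+h)^5/(1+4h)|_{\mathcal X_t}$ yields $\chi(\mathcal X_t)=-56$ and hence $b_3(\mathcal X_t)=60$ (this also appears in \cite[Table~12.2]{IP99}). Therefore $\sum_{P\in\Sing X}\mu(X,P)\leq 60$.

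Finally, if $(P\in X)$ is of type $cA_n$, Lemma~\ref{lem1}(1) gives $\mu(X,P)\geq n^2$; combined with $n^2\leq 60<64=8^2$, this forces $n\leq 7$. If $(P\in X)$ is of type $cD_m$ with $m>4$, Lemma~\ref{lem1}(2) gives $\mu(X,P)\geq m(m-2)$; combined with $m(m-2)\leq 60<63=9\cdot 7$, this forces $m\leq 8$ (and the case $m=4$ is trivial). The only real technical point is verifying the identity $b_2(X)=b_4(X)$ used to derive \eqref{eq1}, which relies on factoriality together with the mixed Hodge structure on a $3$-fold with isolated rational hypersurface singularities; this has already been bundled into the setup preceding equation~\eqref{eq1}, so the argument above is essentially a bookkeeping calculation.
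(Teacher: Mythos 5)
Your argument is essentially identical to the paper's proof: it combines the smoothing of Theorem~\ref{smoothing} with the identity \eqref{eq1} from Theorem~\ref{LES}, the value $b_3(\mathcal X_t)=60$ for a smooth quartic, and the Milnor number lower bounds $\mu\geq n^2$ and $\mu\geq m(m-2)$ from Lemma~\ref{lem1}, and the arithmetic ($49\leq 60<64$, $48\leq 60<63$) is correct. The only extra content is your Chern class computation of $b_3$ and the verification of the hypotheses of Theorem~\ref{LES}, which the paper treats as already established in the surrounding discussion; note only that identifying $\mathcal X_t$ as a quartic is most safely done by deforming the defining equation in $\mathbb{P}^4$ (or by openness of very ampleness of ${-}K$) rather than from the invariants $\rho=1$, $({-}K)^3=4$ alone, since the hyperelliptic genus-$3$ family shares those.
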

  \begin{proof} Let $X \hookrightarrow \mathcal X$ be a smoothing, then for all $t\neq 0$, $\mathcal X_t$ is a nonsingular quartic hypersurface and $b_3(\mathcal X_t) =60$ (see \cite[Table 12.2]{IP99}). By Theorem~\ref{LES}, we have $\mu(X, P)$ is bounded above by $60$, and the result follows immediately from the lower bounds obtained in Lemma~\ref{lem1}.\end{proof}
   \begin{rem}\label{sharp}
The bound on the local analytic type of $cA$ points is sharp, Example~\ref{cA7} is an example of a terminal factorial quartic hypersurface with a $cA_7$ singular point. 

We do not believe that the bound on the local analytic type of $cD$ points is optimal, as we have not been able to write down examples attaining it.
We give examples of terminal factorial quartic hypersurfaces with isolated singular points of type $cD_4, cD_5$ and $cE_6, cE_7$ and $cE_8$ in Section~\ref{cD,cE}.

 \end{rem}
 \begin{rem} By the classification of non-singular Fano $3$-folds, the bounds on the local analytic type of singularities lying on a terminal factorial Fano $3$-fold with Picard rank $1$ and anticanonical degree $-K^3>4$ are even more restrictive than in the case of a quartic hypersurface. \end{rem}

\begin{rem} We can use the same methods to bound the local analytic type of singularities on an arbitrary terminal Gorenstein Fano $3$-fold $X$ with $\rho=1$. Indeed, by Theorem~\ref{LES}: 
\[ \sum_{P\in \Sing X} \mu(X, P)\leq b_3(\mathcal X_t)+ \sigma(X),\]
where $\sigma(X)= b_4(X)-1$ is the {\it defect} of $X$.  The defect of terminal Gorenstein Fano $3$-folds with $\rho=1$ is bounded in \cite{Kal07b}. For example, if $X$ is a (not necessarily factorial) terminal quartic hypersurface, then 
\[ \sum_{P\in \Sing X} \mu(X, P)\leq 75,\]
so that if $(P\in X)$ is a $cA_n$ point, $n\leq 8$. 

Our main interest in this article is in non-rigid quartic $3$-folds; in particular, we have not tried to write down examples of non-factorial quartic hypersurfaces with a singular point of type $cA_8$. We believe that such an example would be found with extra work. 
\end{rem}

\subsection{Divisorial extractions with centre at a $cA_n$ point}
Kawakita classifies the germs of divisorial extractions $f\colon Z\to X$ with centre at a $cA_n$ point. We recall this classification here, as we will use it in Section~\ref{examples}. 
\begin{thm}\cite{Kawk02, Kawk03}
\label{cAn}
Let $(P\in X)$ be a $cA_n$ point and $f\colon Z\to X$ a divisorial extraction with $f(\Exc f)= \{P\}$. Then, one of the following holds:
\begin{enumerate}[1.]
\item {\bf General type:} under a suitable local analytic identification
\[ (P\in X) \simeq 0\in\{ xy+g(z,t)=0\}\subset \mathbb C^4,\]
and $f$ is the blowup of $\mathbb C^4_{x,y,z,t}$ with weights $(r_1,a(n+1)-r_1,a,1)$, where $a$ is the discrepancy of $f$, $a$ and $r_1$ are coprime integers, and \[g(z,t)= z^{n+1}+ g_{\geq a(n+1)}(z, t)\] has weighted degree $a(n+1)$.
In particular,
\[ E= \Exc f \simeq \{xy+ g_{a(n+1)} (z,t)=0\} \subseteq \mathbb{P}(r_1, a(n+1)-r_1, a, 1),\]
and if we denote by $r_2= a(n+1)-r_1$, then we have $aE^3= 1/r_1+1/r_2$. 
\item {\bf Exceptional type, $n=1$:} under a suitable local analytic identification
\[ (P\in X) \simeq 0\in\{ xy+z^2+t^3=0\}\subset \mathbb C^4,\]
 and $f$ is the blowup with weights $(1,5,3,2)$, $f$ has discrepancy $a=4$. 
 \item {\bf Exceptional type $n=2$:}  under a suitable local analytic identification
\[ (P\in X)\simeq 0\in\{ xy+z^3+ g_{\geq 4}(z,t)=0\}\subset \mathbb C^4,\]
the discrepancy of $f$ is $3$ and $Z$ has exactly one non-Gorenstein point 
\[(Q\in Z)\simeq \{0\}\in \{x^2+y^2+z^2+t^2=0\}\subset \mathbb C^4/\frac{1}{4}(1,3,3,2).\] 
\end{enumerate} 
\end{thm}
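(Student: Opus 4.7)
The plan is to analyze $f$ through its divisorial valuation $v = \operatorname{ord}_E$, where $E = \Exc(f)$, and to show that in the general case $v$ is a monomial (toric) valuation on $\mathbb{C}^4$ in suitable analytic coordinates compatible with the $cA_n$ normal form. Writing $\hat{\OO}_{X,P} \simeq \mathbb{C}[[x,y,z,t]]/(xy+g(z,t))$ with $\mult(g)=n+1$, if $v$ is monomial with weights $(r_1,r_2,b,c)$, the weighted-blowup discrepancy formula on $\mathbb{C}^4$ reads $a(E,X) = r_1+r_2+b+c-1-v(xy+g)$. Extremality of the contraction and terminality of $Z$ should force $v(xy) = v(g) = a(n+1)$; after normalising the weights to have $\gcd=1$ and arranging $v(t)=1$ (achievable since a general lift of weight $1$ exists), one deduces $v(z)=a$ and $r_1+r_2 = a(n+1)$ with $\gcd(r_1,a)=1$. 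A short analytic coordinate change then reduces $g$ to a weighted-homogeneous form of degree $a(n+1)$ plus absorbable higher terms.

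Granted the monomial form of $v$, verifying that the stated weighted blowup is a terminal $\mathbb{Q}$-factorial divisorial extraction amounts to checking that $g_{a(n+1)}(z,t)$ is squarefree (the $cA_n$ condition on the tangent cone) and that the proper transform is quasi-smooth along $E$; these are local computations in the ambient toric chart. The self-intersection identity $aE^3 = 1/r_1 + 1/r_2$ then follows from standard toric intersection theory on the weighted blowup of $\mathbb{C}^4$ restricted to the hypersurface via adjunction.

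The deep step is the converse: showing that every divisorial extraction with centre $\{P\}$ is of this monomial shape, except for two sporadic cases at $n=1$ and $n=2$. I would attack this by studying the graded algebra $\operatorname{gr}_v \hat{\OO}_{X,P}$ and arguing that its minimal generators in low weight are forced to be coordinates $x,y,z,t$ adapted to the $cA_n$ form, so that $v$ is monomial. For $n\geq 3$ the normal form should be rigid enough to make this forced; for $n=1,2$ the normal form is loose enough to admit additional low-weight generators, and a direct search for weight systems yielding terminal extractions turns up exactly the two exceptional cases listed: the blowup with weights $(1,5,3,2)$ on $xy+z^2+t^3$ (discrepancy $4$) and the index-$4$ extraction with residual $\frac{1}{4}(1,3,3,2)$ quotient singularity in the $n=2$ case. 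Each sporadic case must then be exhibited explicitly and verified to be the unique non-monomial extraction of its discrepancy.

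The main obstacle is precisely this rigidity statement --- ruling out exotic, non-monomial divisorial valuations over a $cA_n$ point with the same discrepancy. Kawakita's technique combines a Cox-ring / graded-algebra analysis with a deformation-theoretic construction (the so-called \emph{$w$-morphism}) that degenerates $f$ to its associated weighted blowup and allows one to transfer the classification from the monomial case, together with sharp bounds on discrepancies coming from singular Riemann--Roch on $Z$. Once this rigidity is in place, the remaining content of the theorem is a finite Diophantine problem in the weights $(r_1, r_2, a, 1)$ with $r_1 + r_2 = a(n+1)$ and $\gcd(r_1,a) = 1$, together with the two sporadic extractions isolated by direct construction.
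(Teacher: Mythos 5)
You should first be aware that the paper contains no proof of this statement: Theorem~\ref{cAn} is quoted verbatim from Kawakita's papers \cite{Kawk02, Kawk03} and is used as a black box, so there is no internal argument to measure your proposal against. Judged on its own terms, your outline correctly reproduces the architecture of Kawakita's classification --- monomial valuations adapted to the $cA_n$ normal form, the adjunction/discrepancy bookkeeping $a(E,X)=r_1+r_2+a+1-1-v(xy+g)$ forcing $r_1+r_2=a(n+1)$, the toric computation of $aE^3=1/r_1+1/r_2$, and the isolation of the two sporadic cases at $n=1,2$.

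However, the proposal is not a proof, because the step you yourself flag as "the deep step" --- that \emph{every} divisorial extraction with centre $\{P\}$ is realised by a monomial valuation in suitable coordinates, apart from the two exceptional cases --- is named rather than argued. The suggestion that for $n\geq 3$ the normal form is "rigid enough" to force $\operatorname{gr}_v \widehat{\OO}_{X,P}$ to be generated by adapted coordinates is precisely the content of Kawakita's two papers and does not follow from the shape of the equation: a priori the valuation $v=\operatorname{ord}_E$ of an arbitrary extremal extraction has no reason to be quasi-monomial in any coordinate system, and excluding this requires the full machinery of singular Riemann--Roch on $Z$, the numerical analysis of the basket of fictitious singularities, and the general-elephant-type restriction arguments. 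Likewise, the claim that a "direct search for weight systems" turns up exactly the two exceptional cases presupposes that one already knows all extractions arise from a bounded list of candidate weight systems, which is again the hard part. So the proposal is an accurate roadmap of the external reference, with the essential content deferred to that reference --- which is, in effect, exactly what the paper itself does by citing \cite{Kawk02, Kawk03}.
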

\begin{rem} Let $X\subset \mathbb{P}^4$ be a terminal factorial quartic hypersurface. Then, the discrepancies of possible divisorial extractions $f\colon Z\to X$ with centre at a $cA_n$ point $(P\in X)$ can be bounded in the same way as in Proposition~\ref{pro1}. Indeed, by Lemma~\ref{lem1}, if there is an extraction of general type in Theorem~\ref{cAn}, the Milnor number $\mu(X, P)$ satisfies \[\mu(X,P)\geq n(a(n+1)-1),\] but by Theorem~\ref{LES}, $\mu(X,P)\leq 60$. It follows that the discrepancy of $f$ satisfies:
\begin{table}[h] 
\centering 
\begin{tabular}{|c|c|c|c|c|c|}
\hline
n& $6,7$& $5$ & $4$& $3$& $2$\\
\hline
a & $1$& $\leq 2$& $\leq 3$& $\leq 5$ & $\leq 11$\\
\hline
\end{tabular}
\end{table} 
\end{rem}

\section{Examples of non-rigid terminal quartics}\label{examples}
In this section, we present examples of non-rigid terminal factorial quartic hypersurfaces $X\subset \mathbb{P}^4$. Each of these examples has a Sarkisov link initiated by an extraction $f\colon Z\to X$ that contracts a divisor to a singular point. In most of our examples, the singular point is of type $cA_n$, so that the germ of $f$ is a weighted blowup as in Kawakita's classification (Theorem~\ref{cAn}). Our examples are obtained by \emph{globalising} these germs: we can write down an explicit description of $f$ in projective coordinates. 

Let $X\subset \mathbb{P}^4$ be a terminal factorial quartic hypersurface, and assume that $P=(1{:}0{:}0{:}0{:}0)\in X$ is a $cA_n$ point. Up to projective change of coordinates, the equation of $X$ can be written:
\[X=\{\varphi_4(x_0, \cdots , x_4)=  x_0^2x_1x_2+ x_0 \psi_3(x_1, \cdots, x_4)+ \theta_4(x_1, \cdots , x_4)=0\}, \]
where $\varphi_4$ is a homogeneous polynomial of degree $4$ in the variables $x_0, \cdots , x_4$ and $\psi_3$ and $\theta_4$ are homogeneous polynomials in the variables $x_1, \cdots, x_4$. 

The first step in our construction is to look for examples of hypersurfaces $X\subset \mathbb{P}^4$ that can be embedded in a larger weighted projective space $\mathbb{P}$ in such a way that the restriction of a suitable weighted blowup $F\colon \mathcal F\to \mathbb{P}$ is a divisorial contraction $f\colon Z\to X$. We take $X$ to be a complete intersection of the form:
\[X=\begin{cases} \alpha = x_0x_1 + q(x_3, x_4)\\ \beta = x_0x_2 + q'(x_3, x_4)\\ \alpha \beta + \varphi_4'(x_0, \cdots x_4)= 0\end{cases}\subset \mathbb{P}(1^5, 2^2)= \mathbb{P},\]
for homogeneous forms $\alpha, \beta$ of degree $2$. The equation of the hypersurface $X\subset \mathbb{P}^4$ is recovered by substituting $\alpha, \beta$ in the third equation.

Explicitly, we want the germ of $f\colon Z\to X$ to be a weighted blowup of general type in the classification of Theorem~\ref{cAn}. This means that up to local analytic identification, denoting by $a$ the discrepancy of $f$, we have
\begin{equation}\label{eq4} (P\in X) \simeq 0\in\{ xy+g(z,t)=0\}\subset \mathbb C^4,\mbox{ where } g(z,t)= z^{n+1}+ g_{\geq a(n+1)}(z, t)\end{equation} and $g$ has weighted degree $a(n+1)$.

We choose $\varphi_4$ so that for suitable $\alpha, \beta$, setting $x_0=1$ in the third equation gives: 
\[ \alpha\beta+ g(x_3, x_4) + (\mbox{higher weighted order terms})=0.\]
In other words, the restrictions of $\alpha, \beta$ to $\{ x_0=1\}$ define the local analytic isomorphisms that bring the equation of $X\cap \{x_0=1\}$ into the desired form \eqref{eq4}.
The divisorial contraction $f\colon Z\to X$ we construct are restrictions of weighted blowups $F\colon \mathcal F \to \mathbb{P}(1^5,2^2)$, where the weights assigned to the variables $\alpha, \beta, x_3, x_4$ are as in Theorem~\ref{cAn}.
 
The second step in our construction is to show that some of these divisorial contractions initiate Sarkisov links. Since $\mathcal F$ is a Mori dream space (it is toric), it has a $2$-ray configuration as in \eqref{eq0}. 
We check directly that the $2$-ray configuration 
\begin{align}\label{confi}  \xymatrix{Z\subset \mathcal F\ar@{-->}[rr]^{\Phi} \ar[d]_F && \mathcal F+\ar[d]^{F^+}\\
X\subset \mathbb{P} && \mathbb{P}^+} \mbox{ restricts to a Sarkisov link } \xymatrix{Z \ar@{-->}[r] \ar[d]& Z^+\ar[d]\\ X & X^+},\end{align}
or that we can find another embedding $Z\subset \mathcal F^\prime$ via unprojection such that the $2$-ray configuration on $\mathcal F^\prime$ restricts to a Sarkisov link. 
Note that we do not make any assumption on the singularities of $\mathcal F$, in particular, $\mathcal F$ needs not be terminal and $\mathbb{Q}$-factorial. We check the following:
\begin{enumerate}[1.]
\item The map $\Phi_{\vert Z}$ is an isomorphim in codimension $1$ and $Z^+$ is terminal. In our examples,  $\Phi_{\vert Z}$ is the composition of finitely many elementary maps
\[Z= Z_0 \stackrel{\varphi_0}\dashrightarrow Z_1\stackrel{\varphi_1} \dashrightarrow \cdots \stackrel{\varphi_n} \dashrightarrow Z_{n+1}=Z^+\] that are isomorphisms or antiflips, flops and flips (in that order). If $Z_i \stackrel{\varphi_i}\dashrightarrow Z_{i+1}$ is $K$-nonpositive (e.g.~a flip or flop) and $Z_i$ is terminal and $\mathbb{Q}$-factorial, then so is $Z_{i+1}$. We need to check directly that antiflips preserve the terminal condition; we do this by identifying the antiflips as inverses of flips appearing in \cite{Br99}. 
\item The restriction of $\mathcal F^+\to \mathbb{P}^+$ is the contraction of a $K$-negative extremal ray $Z^+ \to X^+$. 
\end{enumerate} 
 
We always denote by $P$ the point $(1{:}0{:}0{:}0{:}0)\in \mathbb{P}^4$.  
\begin{rem}\label{fac}
We do not give details of how to check that each of our examples is factorial. This relies on ad hoc methods and the general scheme is as follows. As $X\subset \mathbb P$ is a Picard rank $1$ hypersurface, $X$ is ($\mathbb Q$-)factorial precisely when the rank of the divisor class group satisfies \[\mathrm{rk} \mathrm{Cl}(X)= \rho(X)= \mathrm{rk} \mathrm{Cl}(\mathbb P).\] Since $\Sing X$ has codimension $3$, we have an isomorphism 
\[ \mathrm{Cl}(X)\simeq \mathrm{Cl}(X\smallsetminus \Sing(X)).\]
Let $\pi \colon \tilde{\mathbb P}\to \mathbb P$ be a map from a smooth variety $\tilde{\mathbb P}$ that restricts to a resolution $\tilde X \to X$ with exceptional locus $E_{X}$. We have natural isomorphisms
\[ \mathrm{Cl}(X)\simeq  \mathrm{Cl}(X\smallsetminus \Sing(X))\simeq \Pic (X \smallsetminus \Sing X)\simeq \Pic(\tilde X\smallsetminus E_{X}) \]
and similarly \[\mathrm{Cl}(\mathbb P)\simeq \Pic (\tilde{\mathbb P}\smallsetminus \Exc \pi).\] Now, $X$ is factorial precisely when $\Pic (\tilde{\mathbb P}\smallsetminus \Exc \pi)\simeq \Pic (\tilde X \smallsetminus E_{\tilde X})$. As the classical Grothendieck-Lefschetz theorem guarantees that $\Pic (\tilde X)\simeq \Pic (\tilde{\mathbb P})$, the result follows by comparison of the kernels of the surjective maps \[ r_1\colon \Pic (\tilde{\mathbb P}) \to \Pic (\tilde{\mathbb P}\smallsetminus \Exc \pi) \mbox{ and } r_2\colon \Pic (\tilde X) \to \Pic (\tilde X\smallsetminus E_{X}).\] These kernels are isomorphic to the free abelian groups on irreducible divisorial components of $\Exc \pi$ and of $E_X$ respectively, and can be worked out in each case. 

Note that we have $2$-ray configuration \eqref{confi}, and therefore all varieties in \eqref{confi} are $\mathbb Q$-factorial if and only if one of them is. These varieties are complete intersections in $\mathbb P, \mathcal F, \mathcal F^+$ and $\mathbb P^+$ and their Picard ranks are $1,2,2$ and $1$ by construction, and the method above can be applied to determine the divisor class group of any of them. In some cases, it can be easier to determine $\mathbb Q$-factoriality of another variety in \eqref{confi} than $X$ (one needs to keep track of Gorenstein indices). \end{rem}
\subsection{A family of examples.}
\label{family}
In this section, we study some links initiated by divisorial contractions with discrepancy $1$. 
We consider the family of quartic hypersurfaces: 
\[ X^{i,j}_4=\{(x_0x_1-x_3^2)^2-(x_0x_2-x_4^2)^2+x_0^{4-i-j}
(x_1^ix_3^j+x_2^ix_4^j)+x_1^4+x_2^4=0 \} \subset \mathbb{P}^4, \]
where $(i,j)$ satisfy $3\leq i+j\leq 4$ and $(i,j) \neq (4,0)$.
Then $(P\in X^{i,j})$ is a $cA_n$ point with: 
\begin{table}[ht] \label{table1}
\[\begin{array}{c||c|c|c|c||c|c|c|c}
(i,j)&(0,4)&(1,3)&(2,2)&(3,1)&(0,3)&(1,2)&(2,1)&(3,0)\\
\hline
n & 3 & 4 & 5 & 6 & 2 & 3 & 4 & 5
\end{array}\]
\caption{}
\end{table}

Indeed, the restriction of $X^{i,j}$ to the affine chart $U_0=\{x_0=1\}$ is 
\[ \{F(x,y,z,t)=(x-z^2)^2-(y-t^2)^2+ x^iz^j+y^it^j +x^4+y^4=0\}\subset \mathbb C^4,\]
and the singularity $(F=0)$ is equivalent to $(G=0)$, where $G$ defines a $cA_n$ singularity. To see this, let $\underline{\varphi}=(\varphi_1, \varphi_2, \varphi_3, \varphi_4) \in \Aut \mathbb C [[x, y,z,t]]$ be defined by:
\[ 
\begin{cases}\varphi_1(x,y,z,t) = x+y-(z^2+t^2)\\ \varphi_2(x,y,z,t) = x-y-(z^2-t^2)\\\varphi_3(x,y,z,t)=z \\\varphi_4(x,y,z,t)=t \end{cases}\]
and check that $F(x,y,z,t)= G(\varphi_1, \varphi_2, \varphi_3, \varphi_4)$, where 
\begin{equation}\label{eq2} G(x,y,z,t)= xy+ z^{n+1}+ t^{n+1}+ G'(x,y,z,t),\end{equation}
where $n$ is as in Table 1, and $G'(x,y,z,t)$ is a polynomial of degree strictly greater than $1$ with respect to the weights \[\underline w=(1/2, 1/2, 1/n+1,1/n+1).\] Since $G$ is a semi quasi-homogeneous polynomial of degree $1$ with respect to $\underline w$, and since no element of its Jacobian algebra has degree strictly greater than $1$, by \cite[I, Section 12]{AGV}, $G\sim xy+ z^{n+1}+ t^{n+1}$ and $(P\in X)$ is a $cA_n$ point.  
The quartic hypersurfaces $X^{i,j}$ are terminal ($\Sing X^{i,j}=\{P\}$) and factorial,  and hence are Mori fibre spaces. 

\begin{rem}Taking $(i,j)=(4,0)$ gives a terminal quartic hypersurface with a $cA_7$ point
\[X=X^{4,0}=\{(x_0x_1-x_3^2)^2-(x_0x_2-x_4^2)^2+x_1^4+x_2^4=0\}\subset \mathbb{P}^4,\] 
but $X=\{f=0\}$ is not factorial as $f= q_1q_1'+ q_2q_2'$, where $q_i, q_i'$ are quadric polynomials. 
 \end{rem}

\begin{nt}\label{nt}
We embed $X^{i,j}$ as a complete intersection in a scroll whose coordinates are those of $\mathbb{P}^4$ on the one hand, and projectivisations of the (non-linear) components of $\underline \varphi\in \Aut \mathbb C[[x,y,z,t]]$, the automorphism we used to transform the equation of $X\cap U_0$ into $G$. Here, $\varphi_3(x,y,z,t)= z$ and $\varphi_4(x,y,z,t)= t$, so we only need to introduce the coordinates:
\[ \begin{cases} \alpha = x_0^2\varphi_1(\frac{x_1}{x_0},\frac{x_2}{x_0}, \frac{x_3}{x_0}, \frac{x_4}{x_0})= x_0(x_1+x_2)-(x_3^2+x_4^2)\\ \beta= x_0^2\varphi_2(\frac{x_1}{x_0},\frac{x_2}{x_0}, \frac{x_3}{x_0}, \frac{x_4}{x_0}))= x_0(x_1-x_2)-(x_3^2-x_4^2)\end{cases}.\]
By construction, $X$ is the complete intersection:
\[ X=\begin{cases}\alpha+ \beta= x_0x_1-x_3^2\\ \alpha-\beta = x_0x_2-x_4^2\\\alpha \beta+ x_0^{4-i-j}(x_1^ix_3^j+ x_2^ix_4^j) + x_1^4+ x_2^4=0 \end{cases}\subset \mathbb{P}(1^5, 2^2)\]
\end{nt}
We set $x_0=1$ to study $X$ in the neighbourhood of $P$, and we use the first two equations to eliminate $x_1, x_2$, so that $X\cap \{x_0=1\}$ is the hypersurface:
\[ \{\alpha \beta + x_3^{n+1}+ x_4^{n+1} + (\mbox{higher order terms})=0\}\subset \mathbb C^4_{x_3,x_4, \alpha, \beta}\]

We now construct Sarkisov links initiated by divisorial contractions $f\colon Z\to X$ that are restrictions of weighted blowups $F\colon \mathcal F\to \mathbb{P}(1^5, 2^2)$ with centre at $P$. We assume that the local analytic coordinates for which the germ of $f$ has the description given in Theorem~\ref{cAn} are $(\alpha, \beta, x_3, x_4)$. 
We use the notation of \cite[\S~3.1]{A13}; $\mathcal F$ is the Picard rank $2$ toric variety $\TV(I, A)$, where \[I=(u,x_0)\cap (x_1, \dots , x_4, \alpha, \beta)\] is the irrelevant ideal in the the coordinates ring $\mathbb C[u, x_0, \dots, x_4, \alpha, \beta]$ and for suitable nonzero integers $r_1, r_2, w_1, w_2$, the $2\times 8$ matrix $A$ below (the numerical rows) defines the action of $\mathbb C^\ast \times \mathbb C^\ast$ with weights 
\begin{equation}\label{eq3}
A=\left( \begin {array}{cccccccc}
 u & x_0& x_1&x_2 & x_3&x_4& \alpha & \beta\\
1 & 0 & -w_1 & -w_2 &-a &-1 & -r_1& -r_2\\
 0 & 1 & 1& 1& 1& 1 &2 &2\end{array}
\right).
\end{equation}
For example $(\mu,\lambda)\in\mathbb{C}^*\times\mathbb{C}^*$ acts on the variable $\alpha$ by
\[\big((\mu,\lambda),\alpha\big)\mapsto \mu^{-r_1}\lambda^2\alpha.\]
With the grading defined in \eqref{eq3}, taking the ample model of a divisor whose class is in $\left(\begin{array}{c}0\\ k\end{array}\right)$
\[\mathcal F \to \Proj\bigoplus_{k\in\mathbb{N}}H^0(\mathcal{F},\mathcal{O}_\mathcal{F}\left(\begin{array}{c}0\\ k\end{array}\right))\]
is the morphism given by: 
\[(u,x_0,x_1,x_2,x_3,x_4,\alpha,\beta)\mapsto (x_0, u^{\omega_1}x_1,u^{\omega_2}x_2,u^{a}x_3,ux_4,u^{r_1}\alpha,u^{r_2}\beta).\]
This is precisely the weighted blowup $F:\mathcal{F}\rightarrow\mathbb{P}(1^5,2^2)$ we are after.

In what follows, we always denote by 
$L$ the pullback of $\OO_{\mathbb{P}}(1)$-- so that $L \in \left( \begin{array}{c}0\\1\end{array}\right)$-- and by $E=  \left( \begin{array}{c}1\\0\end{array}\right)$ the $F$-exceptional divisor.
 L

The form of the expression \eqref{eq2} imposes that the discrepancy of $f$ is $1$. We have $-K_Z= f^*(-K_X)- E$, where $E= \Exc f$, so that $a=1$ and $r_1+ r_2= n+1$, where $(n, (i,j))$ are as in Table 1. Set $r_1=r$, $r_2= n+1-r$, and assume as we may that $r\leq n+1-r$. We have:
\[ K_{\mathcal F} \in \left( \begin{array}{c} (n+1)+w_1+w_2+1\\-9\end{array} \right).\]
\begin{lem}\label{weights}
The weighted blowup $F\colon \mathcal F\to \mathbb{P}$ restricts to a divisorial contraction $f\colon Z\to X$ with discrepancy $1$ if the weights $r, w_1, w_2$ of $F$ in \eqref{eq3} are one of:
\begin{itemize}
\item[1.] $i=0$ ($n=2$ or $3$) and $w_1=w_2=1$, $r=1$ or $2$, 
\item[2.] $w_1=w_2=2$ and $r\geq 2$ ($n\geq 3$).
\end{itemize}
\end{lem}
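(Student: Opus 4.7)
The plan is to compute the discrepancy of $f = F|_Z$ along the exceptional divisor by combining the toric formula for $K_{\mathcal F}$ with adjunction on the complete intersection $Z \subset \mathcal F$, and then to verify directly that this discrepancy equals $1$ in each of the two listed cases.

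Since $\mathcal F$ is toric, $-K_{\mathcal F}$ is linearly equivalent to the sum of its torus-invariant prime divisors, so summing the columns of \eqref{eq3} gives $[K_{\mathcal F}] = (w_1+w_2+n+2,\,-9)$ in $\mathrm{Cl}(\mathcal F)$. Next I would determine the bidegrees of the three proper transforms $Z_1, Z_2, Z_3$ of the defining equations of $X$: in each equation the monomials have distinct $u$-weights, and the bidegree is set so that the smallest power of $u$ needed to rehomogenise any monomial is $0$. Using $r \le n+1-r$, this yields
\[
d_1 = -\min(r, w_1, 2),\quad d_2 = -\min(r, w_2, 2),\quad d_3 = -\min(n+1,\, iw_1+j,\, iw_2+j,\, 4w_1,\, 4w_2),
\]
with $[Z_k] = (d_k, 2)$ for $k=1,2$ and $[Z_3] = (d_3, 4)$. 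Applying adjunction and noting that $[f^\ast K_X] = (0,-1)|_Z$, the discrepancy of $f$ along $E = \{u=0\}|_Z$ is
\[
a = w_1 + w_2 + n + 2 + d_1 + d_2 + d_3.
\]

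I would then check the formula for $a$ by direct substitution in each of the two cases. In Case~1, the hypotheses $w_1=w_2=1$ and $r \in \{1,2\}$ give $d_1=d_2=-1$; the restriction $i=0$ forces $(i,j) \in \{(0,3),(0,4)\}$, so $j = n+1 \le 4$ and consequently $d_3 = -(n+1)$, yielding $a = 1+1+n+2-1-1-(n+1) = 1$. In Case~2, $w_1=w_2=2$ and $r \ge 2$ give $d_1=d_2=-2$; a direct inspection of Table~\ref{table1} shows $2i+j \ge n+1$ for every entry with $n\ge 3$ (with $n+1 \le 7 < 8$), so $d_3 = -(n+1)$ and $a = 2+2+n+2-2-2-(n+1) = 1$.

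The discrepancy count is thus routine; the hard part is confirming that $f$ is genuinely a divisorial contraction rather than just a birational map with the right discrepancy. One needs to check that the exceptional locus of $f$ is a prime divisor and that $Z$ is terminal and $\mathbb Q$-factorial. For irreducibility, I would restrict the three rehomogenised equations to $\{u=0\}$ in each case and verify that their common zero locus in the exceptional fibre of $F$ is a single cDV hypersurface; this identifies, in the chart $x_0=1$, the germ of $f$ at $P$ with the weighted blowup of general type in Theorem~\ref{cAn} with weights $(r, n+1-r, 1, 1)$ on $(\alpha, \beta, x_3, x_4)$, which automatically has terminal singularities. The $\mathbb Q$-factoriality of $Z$ then follows from the class-group comparison outlined in Remark~\ref{fac}.
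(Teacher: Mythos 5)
Your proposal is correct and takes essentially the same route as the paper: the paper likewise records the bidegrees of the three hypersurfaces cutting out $Z\subset\mathcal F$, applies adjunction to compute $-K_Z$, and matches it against $L-E$ (the paper merely runs the computation in the opposite direction, deducing the admissible weights from $a=1$ rather than substituting the listed weights and verifying $a=1$). Your closing paragraph on irreducibility of the exceptional divisor and terminality via Theorem~\ref{cAn} makes explicit a check that the paper's one-line proof leaves implicit.
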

\begin{proof}
The $3$-fold $Z$ is a general complete intersection of $3$ hypersurfaces of degrees determined by $r, w_1$ and $w_2$. Once these degrees are known, we use adjunction to write the anticanonical class of $Z$. Since ${-}K_X\in \mathcal O(1)$ and $a=1$, $-K_Z\sim L-E$ and this yields the possible values for $r, w_1$ and $w_2$. 
For example, if $w_1, w_2, r\geq 2$, $Z$ is the proper transform of $X$ under $F$ and it is a complete intersection:
\[ \begin{cases}
u^{r-2}\alpha= x_0(u^{w_1-2}x_1+u^{w_2-2}x_2)-(x_3^2+x_4^2)\\
u^{n-1-r}\beta= x_0(u^{w_1-2}x_1-u^{w_2-2}x_2)-(x_3^2-x_4^2)\\
\alpha\beta+ u^{iw_1+ j-2} x_1^i x_3^j+u^{iw_2+ j-2} x_2^i x_4^j+ u^{4w_1-n-1}x_1^4+ u^{4w_2-n-1} x_2^4=0 \end{cases},\]
so that 
\[ Z= \left( \begin{array}{c}-2\\2\end{array}\right)\cap \left( \begin{array}{c}-2\\2\end{array}\right)\cap \left( \begin{array}{c}-(n+1)\\4\end{array}\right) \mbox{ and } -K_Z\in \left( \begin{array}{c}3-(w_1+w_2)\\1\end{array}\right),\]
and this forces $w_1=w_2=2$. Other cases are entirely similar. 
\end{proof} 

We check that, with one exception labelled ``bad link", for all weights in Lemma~\ref{weights}, the $2$-ray configuration on $\mathcal F$ induces a $2$-ray configuration on $TV(I,A)$ that induces a Sarkisov link for $X^{i,j}$. In the case of the bad link, the second birational contraction $g\colon Z\dashrightarrow Y$ has relatively trivial canonical class, so that $Y$ is not terminal and the $2$-ray configuration is not a Sarkisov link. Table 2 gives details of the construction of each Sarkisov link. 

\begin{thm}\label{main} There are examples of non-rigid terminal factorial quartic hypersurfaces in $\mathbb{P}^4$ with an isolated $cA_n$ point for all $2\leq n\leq 6$. 
For each combination $\big((i,j), n\big)$ in Table 1, $X^{i,j}\subset \mathbb{P}^4$ is a non-rigid terminal factorial quartic $3$-fold with an isolated singular point $(P\in X)$ of type $cA_n$. Table 2 lists Sarkisov links initiated by a divisorial contraction $f\colon Z\to X^{i,j}$ with centre at $(P\in X)$ and discrepancy $1$. Each entry specifies the weights of $(\alpha, \beta, x_3,x_4)$ for the germ of $f$ in the notation of Theorem~\ref{cAn}, and gives the explicit construction of the link. 
\begin{table}[ht]
\label{table2}
\scriptsize
\[\begin{array}{c||c|c|c}
p\in X&\text{blowup weights of $f$}&\text{decomposition of } \Phi_{\vert Z}&\text{Y/T}\\
\hline
\hline
cA_2&(2,1,1,1)&12\text{ flops}&Y_{3,4}\subset\mathbb{P}(1,1,1,1,2,2)\\
\hline
cA_3&(3,1,1,1)\text{ for }X^{1,2}& 8\text{ flops}& X^{1,2}\quad(\star)\\
\hline
&(3,1,1,1)\text{ for }X^{0,4}&&\text{ bad link }\\
\hline
&(2,2,1,1)&\text{4 flops}&dP_2\text{ fibration over }\mathbb{P}^1\\
\hline
cA_4&(3,2,1,1)&\text{2 flops then flip }(3,1,1,-1,-1;2)&dP_3\text{ fibration over }\mathbb{P}^1\\
\hline
cA_5&(4,2,1,1)&\text{2 flops}&\text{conic bundle over }\mathbb{P}(1,1,2)\\
\hline
&(3,3,1,1)&\mathbb Cong \text{then 2 flips }(3,1,1,-1,-1;2)&dP_4\text{ fibration over }\mathbb{P}^1\\
\hline
cA_6&(5,2,1,1)&\text{2 flops}&Y_{6,6}\subset\mathbb{P}(1,1,2,3,3,5)\\
\hline
&(4,3,1,1)&\mathbb Cong \text{then flip }(3,1,1,-1,-1;2)&\text{conic bundle over }\mathbb{P}(1,1,2)\\
\hline
\end{array}\]
\caption{}
\end{table}
\end{thm}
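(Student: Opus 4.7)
The plan is to proceed row by row through Table~2, treating each entry as an independent construction of a Sarkisov link for the corresponding quartic $X^{i,j}$. The setup is uniform: each $X^{i,j}$ is already embedded as a $(2,2,4)$ complete intersection in $\mathbb{P}=\mathbb{P}(1^5,2^2)$ via Notation~\ref{nt}, the $cA_n$-type of $(P\in X^{i,j})$ has been recorded in Table~1, and Lemma~\ref{weights} already lists the admissible weighted blowups $F\colon\mathcal F\to\mathbb{P}$ of discrepancy one. The proof therefore reduces to verifying, for each row, that the two-ray game on the Picard rank two toric variety $\mathcal F=\TV(I,A)$ restricts to a Sarkisov link on the proper transform $Z\subset\mathcal F$.

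For each row, the first check is that $f=F_{|Z}\colon Z\to X^{i,j}$ really is a divisorial extraction of the germ type described by Kawakita's classification (Theorem~\ref{cAn}). Substituting the two quadric relations into the quartic in the chart $\{x_0=1\}$ yields an equation $\alpha\beta+g(x_3,x_4)+(\text{higher weighted order})=0$ with $g$ of weighted degree $n+1$ under the weights $(r,n{+}1{-}r,1,1)$ assigned to $(\alpha,\beta,x_3,x_4)$. Terminality of $Z$ is then read off the affine charts of $\mathcal F$, while $\mathbb{Q}$-factoriality follows from the kernel-comparison recipe of Remark~\ref{fac}.

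The central step is to run and restrict the two-ray game on $\mathcal F$. Because $\rho(\mathcal F)=2$, the GIT/Mori chamber decomposition of $\Effb\mathcal F$ is read off directly from the columns of the matrix $A$ in \eqref{eq3}; walking from the $F$-ample cone across each wall toward the opposite extremal ray of $\Movb\mathcal F$ yields a finite sequence of small toric modifications $\mathcal F=\mathcal F_0\dashrightarrow\mathcal F_1\dashrightarrow\cdots\dashrightarrow\mathcal F^+$, each of which is a toric antiflip, flop, or flip whose base locus is the weighted projective subspace cut out by the two columns crossing the wall. For each row one verifies that this toric sequence restricts to a sequence $Z=Z_0\dashrightarrow\cdots\dashrightarrow Z^+$: the restricted contracted locus is computed in the two adjacent affine charts, its normal bundle is evaluated, and the resulting modification is identified either as one of the flops recorded in Table~2 or as the flip with splitting type $(3,1,1,-1,-1;2)$.

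The main obstacle concerns the rows in which $\Phi_{|Z}$ contains an \emph{antiflip}. Antiflips are $K_Z$-positive and therefore do not automatically preserve terminality, so condition (ii) of the criterion lemma in Section~\ref{general} must be checked by hand, exactly as announced in the text preceding the theorem. The strategy is to identify each such antiflip with the inverse of an explicit flip in Brown's classification \cite{Br99} by matching the local equations and the splitting weights of the contracted curve; terminality of the intermediate $Z_i$ then follows. Once this is secured, confirming that the terminal contraction $Z^+\to X^+$ (or $Z^+\to T$) at the opposite end of the game is the $K$-negative contraction of an extremal ray, and that its target is the Mori fibre space listed in the last column of Table~2 — a Fano hypersurface such as $Y_{3,4}\subset\mathbb{P}(1,1,1,1,2,2)$ or $Y_{6,6}\subset\mathbb{P}(1,1,2,3,3,5)$, a del Pezzo fibration over $\mathbb{P}^1$, or a conic bundle over $\mathbb{P}(1,1,2)$ — is a routine adjunction computation using the weights in \eqref{eq3}. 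The excluded row with weights $(3,1,1,1)$ for $X^{0,4}$ is labelled a ``bad link'' because there the opposite contraction turns out to be $K_Z$-trivial, so its image fails to be terminal and no Sarkisov link is produced, whereas for $X^{1,2}$ the same weights give an honest link.
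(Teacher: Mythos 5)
Your overall strategy coincides with the paper's: each row of Table~2 is verified individually by restricting the two-ray game of the rank-two toric ambient $\mathcal F=\TV(I,A)$ to the proper transform $Z$, identifying the restricted wall crossings chart by chart, and computing the opposite-end target by a change of basis on the grading matrix. Your reading of the ``bad link'' row (the second contraction is $K$-trivial, so the target is not terminal) also matches the paper. However, there is a genuine gap in the step you treat as automatic: you assume that the toric sequence $\mathcal F=\mathcal F_0\dashrightarrow\cdots\dashrightarrow\mathcal F^+$ always \emph{restricts} to a sequence of small modifications of $Z$. This fails for the very first row of Table~2. In the $cA_2$ case the equations of $Z$ lie in the ideal $(u,x_0)$, and the first small toric map of $\mathcal F$, while small on the ambient space, extracts a divisor on $Z$ --- it is not a step of the two-ray game of $Z$ at all. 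The paper's fix is to re-embed $Z$ by \emph{unprojection} into a new rank-two toric variety: one introduces an unprojection variable $s$ (eliminating $\beta$ and $x_1$), observes that the resulting hypersurface equation again lies in $(u,x_0)$, and introduces a second unprojection variable $t$ of bidegree $(-4,3)$, after which $Z$ sits as a codimension-two complete intersection in a toric $\mathcal F'$ whose two-ray game does restrict (giving the $12$ flops and the contraction to $Y_{3,4}\subset\mathbb{P}(1^4,2^2)$ of \cite{CM04}). Without this re-embedding the first row of your case analysis cannot be completed, and you need a criterion for detecting when it is required (equations of $Z$ contained in the ideal generated by the coordinates being contracted).

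A secondary, more minor point: your ``main obstacle'' concerning antiflips does not actually arise in Theorem~\ref{main}. Every decomposition in Table~2 consists of isomorphisms, flops and flips of type $(3,1,1,-1,-1;2)$, all of which are $K$-nonpositive and hence preserve terminality automatically; the antiflip requiring comparison with \cite{Br99} occurs only in the separate higher-discrepancy construction of Example~\ref{antiflip}, which is not part of this theorem. Being prepared for it is harmless, but the genuinely delicate verifications here are the unprojection re-embeddings and the chart-by-chart identification of the flip $(3,1,1,-1,-1;2)$ at the point $P_\beta$ (e.g.\ in the $cA_4$ and $cA_6$, weights $(4,3,1,1)$, cases), which your proposal does gesture at correctly.
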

\begin{proof}
Each case is treated individually. To illustrate the computations involved, we treat the $cA_6$ case in detail. 
We then say a few words about the $cA_2$ case, where we recover the example of a non-rigid quartic constructed in \cite{CM04}. 

\begin{rem} The case labelled as $(\star)$ in Table 2 is a quadratic involution in the language of \cite{CPR}. In particular, the link does not produce a new Mori fibre space; it is just a birational selfmap of $X^{1,2}$ that is not an isomorphism.
\end{rem}
 
\noindent{\bf Non-rigid quartic with a $cA_6$ singular point.} Consider the terminal, factorial quartic hypersurface:
\[X= \{ (x_0x_1-x_3^2)^2 - (x_0x_2- x_4^2)^2+ x_1^3x_3+x_2^3x_4+x_1^4+x_2^4=0\}\subset \mathbb{P}^4.\]
As above, we embed $X$ as a complete intersection in $ \mathbb{P}= \mathbb{P}(1^5, 2^2)$, where the variables of weight $2$ are $\alpha, \beta$:
\[X= \begin{cases}
\alpha+ \beta= x_0x_1- x_3^2\\
\alpha - \beta= x_0x_2-x_4^2\\
\alpha \beta + x_1^3x_3+x_2^3x_4+x_1^4+x_2^4=0.
\end{cases}\subset \mathbb{P}(1^5, 2^2)
\] 
We construct Sarkisov links initiated by a divisorial contraction $f\colon Z\to X$, which is the restriction of a weighted blowup $F\colon \mathcal F\to \mathbb{P}$. 
We assume that the weights assigned to the variables $(\alpha, \beta, x_3, x_4)$ are those in Theorem~\ref{cAn}. By Theorem~\ref{cAn} and Lemma~\ref{weights}, $\mathcal F$ is the Picard rank $2$ toric variety $TV(I,A)$, where $A$ is of the form 
\[ \left(\begin{array}{cccccccc}
u&x_0&x_1&x_2&x_3&x_4&\alpha &\beta\\
1& 0&-2&-2&-1&-1&-r&-(7-r)\\
0&1&1&1&1&1&2&2
\end{array}\right), \mbox{ for } r=2 \mbox{ or } 3,\]
and $Z$ is a complete intersection of the form $(2L-2E, 2L-2E, 4L-7E)$ in $\mathcal F$. 
 
\begin{case} The germ of $f$ is a blowup with weights $(5,2,1,1)$. \end{case}
We re-order the coordinates of $\mathbb{P}$ and write the action $A$ as follows:
 \[
\left(\begin{array}{cccccccc}
u&x_0&x_3&x_4&\beta &x_1&x_2&\alpha\\
1& 0&-1&-1&-2&-2&-2&-5\\
0&1&1&1&2&1&1&2
\end{array}\right).\]
In this case, $Z$ is the complete intersection:
\[ Z= \begin{cases} u^3\alpha+ \beta= x_0x_1-x_3^2\\u^3\alpha-\beta= x_0x_2-x_4^2\\\alpha\beta+x_1^3x_3+x_2^3x_4+ x_1^4+x_2^4=0
 \end{cases}\subset \mathcal F.\] Taking the difference of the first two equations shows that the variable $\beta$ is redundant and:
 \[ Z=\begin{cases} u^3\alpha= x_0(x_1+x_2)-(x_3^2+x_4^2)\\
 \alpha(x_0(x_1-x_2)-(x_3^2-x_4^2))+x_1^3x_3+x_2^3x_4+ u(x_1^4+x_2^4)=0\end{cases} \subset \mathcal F,\] where we now denote by $\mathcal F$ the toric variety $TV(I,A)$, for 
 \[ A=\left(\begin{array}{ccccccc}
u&x_0&x_3&x_4 &x_1&x_2&\alpha\\
1& 0&-1&-1&-2&-2&-5\\
0&1&1&1&1&1&2
\end{array}\right).\]

The $2$-ray configuration on $\mathcal F$ is:
\[ \xymatrix{ 
& \mathcal F\ar[dl]_{F} \ar@{-->}[rr]^{\Phi} && \mathcal F^+ \ar[dr]^{G}&\\ 
\mathbb{P} &&&&\mathbb{P}^+ & }\]
where $\Phi$ is a small map that is the ample model for $L-(1+\varepsilon)E$ and $G\colon \mathcal F^+\to \mathbb{P}^+$ is a divisorial contraction, where $\mathbb{P}^+\simeq \Proj (\mathcal F, n(L-2E))$ for suitable $n>\!\!>1$. 

The only pure monomials in $u, x_0,x_3, x_4$ in the equation of $Z$ are in the expression $x_3^2+ x_4^2$ in the first equation, so that the restriction of $\Phi_1$ to $Z$ is a flop in $2$ lines (a copy of $\mathbb{P}^1_{u,x_0}$ above each of the two points $\{x_3^2+x_4^2=0\}\subset \mathbb{P}^1_{x_3,x_4}$), and since $\alpha$ doesn't divide the equations of $Z$, $G$ does restrict to a divisorial contraction. 

To determine $\mathbb{P}^+$, we find a suitable change of basis in which to express the action $A$. In practice, we look for a matrix $M$ in $\mbox{Sl}_2(\mathbb Z)$ such that \[M\cdot\left( \begin{array}{c}-5\\2\end{array}\right)= \left(\begin{array}{c}\ast \\0\end{array}\right).\] The matrix $M= \left( \begin{array}{cc}1 &3\\-2&-5\end{array}\right)$ transforms the action $A$ into:
\[ 
  \left(\begin{array}{ccccccc}
u&x_0&x_3&x_4 &x_1&x_2&\alpha\\
1& 3&2&2&1&1&1\\
-2&-5&-3&-3&-1&-1&0
\end{array}\right),\]
so that $\mathbb{P}^+=\mathbb{P}(1^2, 2, 3^2, 5)$, with coordinates $x_1,x_2$ (degree $1$), $u\alpha$ (degree $2$), $x_3\alpha, x_4\alpha$ (degree $3$) and $x_0\alpha^2$ (degree $5$). 
Writing the equations of the proper transform of $Z^+$ shows that $Y\subset \mathbb{P}^+$ is 
\[ \begin{cases}
(u\alpha)^3= (x_0\alpha^2)(x_1+x_2)-((x_3\alpha)^2+ (x_4\alpha)^2)\\
(x_0\alpha^2)(x_1-x_2) - ((x_3\alpha)^2-(x_4\alpha)^2) +x_1^3(x_3\alpha) + x_2^3(x_4\alpha)+ (u\alpha)(x_1^4+x_2^4)=0\end{cases}\]
so that $Y$ is the complete intersection of two hypersurfaces of degree $6$ in $\mathbb{P}^+$. Since $x_3\alpha$ is a section of $g^*\OO_{\mathbb{P}^+}(3)$, the $3$-fold $Y$ has Fano index $3$, and by construction of $f$, its basket consists of a single $[5,2]$ singular point at $P_{\alpha}$; $Y$ is the Fano variety number 41920 in \cite{GRDB}. 

\begin{case}The germ of $f$ is a blowup with weights $(4,3,1,1)$. \end{case}
We re-order the coordinates of $\mathbb{P}$, and write the action $A$:
 \[
\left(\begin{array}{cccccccc}
u&x_0&x_3&x_4&\beta &x_1&x_2&\alpha\\
1& 0&-1&-1&-3&-2&-2&-4\\
0&1&1&1&2&1&1&2
\end{array}\right).\]The $3$-fold $Z$ is the complete intersection:
\[ Z= \begin{cases} u^2\alpha= x_0(x_1+x_2)-(x_3^2+x_4^2)\\u \beta= x_0(x_1-x_2)-(x_3^2-x_4^2)\\\alpha\beta+ x_1^3x_3+x_2^3x_4+u(x_1^4+x_2^4)=0
 \end{cases}\subset \mathcal F.\]

The $2$-ray configuration on $\mathcal F$ is:
\[ \xymatrix{ 
& \mathcal F\ar[dl]_{F} \ar@{-->}[rr]^{\Phi_1} && \mathcal F_1 \ar@{-->}[rr]^{\Phi_2}&&\mathcal F_2 \ar[dr]^{G}&\\ 
\mathbb{P} &&&&&&\mathbb{P}^+ & }\]
where $\Phi_1$ is a small map and $\mathcal F_1$ the ample model of $L-(1+ \varepsilon)E$, $\Phi_2$ is a small map and $\mathcal F_2$ the ample model of $2L-(3+ \varepsilon)E$, and $G$ is a divisorial contraction to $\mathbb{P}^+\simeq \Proj(\mathcal F, n(L-2E))$ for suitable $n>\!\!>1$. 

Since $x_3^2, x_4^2$ appear in two of the equations defining $Z$, $Z$ does not contain any curve contracted by $\Phi_1$ and ${\Phi_1}_{\vert Z}$ is an isomorphism. We still denote by $Z$ its image under $\Phi_1$.

We show that ${\Phi_2}_{\vert Z}$ is a flip. We study the behaviour of $Z$ near $P_{\beta}=(0{:}0{:}0{:}0{:}1{:}0{:}0{:}0)$. The restriction of  $Z$ to $U_{\beta}=\{\beta=1\}$ is a hypersurface: we may use the second and third equation to eliminate $u$ and $\alpha$, so that $Z\cap U_{\beta}$ is the hypersurface defined by the first equation. As above, under the change of coordinates associated to \[\left(\begin{array}{cc} 1 &1\\2&3\end{array}\right)\in \mbox{SL}_2(\mathbb Z),\] the action becomes:
\[ 
  \left(\begin{array}{cccccccc}
u&x_0&x_3&x_4&\beta &x_1&x_2&\alpha\\
1& 1&0&0&-1&-1&-1&-2\\
2&3&1&1&0&-1&-1&-2
\end{array}\right),\]
so that once the variables $u, \alpha$ and the second and third equations defining $Z$ have been eliminated, we are left with a flip of the hypersurface defined by the first equation of $Z$ in $\mathbb C_{x_0,x_3,x_4, x_1,x_2}$, which is \[x_0(x_1+x_2)+x_3^2+x_4^2+\cdots=0,\] that is, in the notation of \cite{Br99}, of the form $(3,1,1,-1,-1; 2)$. There are thus $2$ flipped curves, and while $Z$ has a $cA/3$ singularity over $P_{\beta}$, $Z_2$ is Gorenstein over $P_{\beta}$. The map $G$ is a fibration over $\mathbb{P}(\alpha, x_1,x_2)= \mathbb{P}(1,1,2)$, and the equations of $Z$ show that the restriction $Z_2\to \mathbb{P}(1,1,2)$ is a conic bundle.  
 
\noindent{\bf Non-rigid quartic with a $cA_2$ point.} Consider the terminal, factorial quartic hypersurface
\[ 
X=\{ (x_0x_1-x_3^2)^2 - (x_0x_2- x_4^2)^2+ x_0(x_3^3+x_4^3)+ x_1^4+x_2^4=0\}\subset \mathbb{P}^4.\]

By Lemma~\ref{weights},  after re-ordering the coordinates of $\mathbb{P}$, $\mathcal F$ can only be the Picard rank $2$ toric variety $TV(I,A)$, where 
\[ 
  \left(\begin{array}{cccccccc}
u&x_0&\alpha &x_1&x_2&x_3 &x_4&\beta\\
1& 0&-1&-1&-1&-1&-1&-2\\
0&1&2&1&1&1&1&2
\end{array}\right),\]
and $Z$ is given by the equations:
\[ \begin{cases}
\alpha= x_0(x_1+x_2)- u(x_3^2+x_4^2)\\
u\beta= x_0(x_1-x_2)- u(x_3^2-x_4^2)\\
\alpha\beta + x_0(x_3^3+x_4^3)+ u(x_1^4+x_2^4)=0.
\end{cases}\]
The first equation shows that we may eliminate the variable $\alpha$. The first step of the $2$-ray configuration of $\mathcal F$ is a small map but introduces a new divisor on $Z$, hence is not a step in the $2$-ray game of $Z$. Note that the second equation is in the ideal $(u,x_0)$, we will re-embed $Z$ by unprojection into a toric variety of Picard rank $2$ whose $2$-ray configuration restricts to suitable maps on $Z$. To do so, we introduce an unprojection variable $s$ and replace the second equation with:
\[ \begin{cases}  
sx_0 = \beta + x_3^2-x_4^2\\
su= x_1-x_2
\end{cases}\] 
From the two equations that define $s$, we eliminate the variable $\beta$ and $x_1$ so that the (isomorphic) image of $Z$ under the unprojection is:
\begin{eqnarray*}
\{\big( x_0(2x_1-su)-u(x_3^2+x_4^2)\big)(sx_0-x_3^2+x_4^2) + x_0(x_3^3+x_4^3)+ u(x_1^4+(x_1-su)^4)=0\}\\
 \subset  \left(\begin{array}{ccccccc}
u&x_0 &x_1&x_3 &x_4& s\\
1& 0&-1&-1&-1&-2\\
0&1&1&1&1&1
\end{array}\right),\end{eqnarray*}
Again, we see that the equation defining $Z$ is in the ideal $(u,x_0)$. If we denote by $f,g$ (non-unique) polynomials such that $Z=\{ x_0 f+ u g=0\}$, so that we have \[ f= x_3^3+ x_4^3+ \cdots \mbox{ and } g= x_1^4+ (us)^4+ \cdots,\] and introduce a second unprojection variable 
\[ t= \frac{f}{u}= \frac{g}{x_0}\in \left(\begin{array}{c} -4\\3 \end{array}\right),\]
then we see that $Z$ is the complete intersection
\[ \begin{cases}
tu= f\\
tx_0= g\end{cases}\subset \mathcal F= \left(\begin{array}{ccccccc}
u&x_0 &x_1&x_3 &x_4&t & s\\
1& 0& -1&-1&-1&-4& -2\\
0&1&1&1&1&3&1
\end{array}\right).\]
The $2$-ray configuration of $\mathcal F$ is 
\[ \xymatrix{ 
& \mathcal F\ar[dl]_{F} \ar@{-->}[rr]^{\Phi} && \mathcal F^+ \ar[dr]^{G}&\\ 
\mathbb{P} &&&&\mathbb{P}^+ & }\]
where $\Phi$ is a small map and $\mathcal F^+$ the ample model of $L-(1+\varepsilon)E$ and $G$ is a divisorial contraction. The restriction $\Phi_{\vert Z}\colon Z \dashrightarrow Z^+$ is a flop in $12$ lines that are copies of $\mathbb{P}^1_{u,x_0}$ lying over the $12$ points \[\{ x_1^4+x_3^4-x_4^4=x_2^3+x_3^3+\cdots=0\}\subset \mathbb{P}_{x_1,x_3,x_4}.\] Since $Z \not \subset \{ s=0\}$, the restriction $G_{\vert Z^+} \colon Z^+\to Y$ is a divisorial contraction. As above, applying the coordinate change for the action associated to \[\left(\begin{array}{cc} 1 &1\\1&2\end{array}\right)\in \mbox{SL}_2(\mathbb Z)\] transforms $A$ into 
\[ 
  \left(\begin{array}{ccccccc}
u&x_0&x_1&x_3&x_4 &t&s\\
1&1 &0&0&0&-1&-1\\
1&2&1&1&1&2&0
\end{array}\right),\]
and we see that $\mathbb{P}^+= \mathbb{P}(1^4, 2^2)$, with coordinates $su, x_1, x_3, x_4$ (degree $1$), $sx_0, st$ (degree $2$). The proper transform of $Z$ is
\[ Y =\begin{cases} x_1^4+ \cdots =0\\ x_2^3+x_3^3+ \cdots=0 \end{cases}\subset \mathbb{P}^+,\]
i.e.~$Y$ is the complete intersection of a cubic and a quartic hypersurface in $\mathbb{P}^+$, a Fano $3$-fold of codimension $2$ and genus $2$. The map $Z^+ \to Y$ is a Kawamata blowup of one of the two $1/2(1,1,1)$ points on $Y$. This link is constructed in \cite{CM04}.
\end{proof}
\begin{rem} In several cases, one or both of the variables $\alpha, \beta$ are redundant. This means that $f\colon Z\to X$ is the restriction of a weighted blowup of some $\mathbb{P}^\prime$ with $\mathbb{P}^4\subset \mathbb{P}^\prime\subset \mathbb{P}$: the construction could have been obtained with a ``smaller embedding". For example, this is the case in our treatment of a terminal factorial quartic hypersurface with a $cA_2$ point: \cite{CM04} construct the same link without introducing $\alpha, \beta$. In a given example, it is usually clear how many (if any) variables need to be introduced. We have chosen to always introduce two variables (and then eliminate redundant ones) in order  to present our results in a unified way.
\end{rem}
\begin{rem} It is crucial to understand that we make no claim about the existence of Sarkisov links initiated by divisorial contractions $f\colon Z\to X$ whose germs have weights different from those in Lemma~\ref{weights}, where $X$ is one of the hypersurfaces $X^{i,j}$. Such divisorial contractions may occur, but they are not restrictions of weighted blowups of $\mathbb{P}=\mathbb{P}(1^5, 2^2)$. We expect that in some cases, one may construct such contractions by considering a different embedding of $X\subset \mathbb{P}^\prime$ and looking at restrictions of weighted blowups of $\mathbb{P}^\prime$.  
For instance, we do not know wether $X^{3,1}$ admits a Sarkisov link initiated by a divisorial contraction whose germ is a weighted blowup $(6,1,1,1)$.  
\end{rem}

\subsection{Other examples with $cA$ singularities}\label{otherex}
In this section, we use similar techniques to give an example of a non-rigid terminal factorial quartic hypersurface with a $cA_7$ singular point. These can also be used to construct Sarkisov links initiated by divisorial contractions with discrepancy $a>1$ and centre at a $cA_n$ point for $n\geq 2$, where the possible values $a,n$ are determined in Proposition~\ref{pro1}. We give an example with $n=2$ and $a=2$.  
\begin{exa}\label{cA7}
Let $X$ be the hypersurface: 
\[ X= \{ (x_0x_1-x_3^2)^2-(x_0x_2-x_4^2)^2+ x_0x_1^3-x_1^2x_3^2+ x_1^4+x_2^4=0\} \subset \mathbb{P}^4.\]
The restriction of $X$ to the affine chart $U_0=\{ x_0=1\}$ is 
\[ F(x,y,z,t)= (x-z^2)^2-(y-t^2)^2+ x^3-x^2z^2+x^4+y^4\]
and the singularity $(F=0)$ is equivalent to $(G=0)$, where $G$ defines a $cA_7$ singularity. To see this, let $\underline \varphi = (\varphi_1, \varphi_2, \varphi_3, \varphi_4)\in \Aut \mathbb C[[x,y,z,t]]$, where 
\[ \begin{cases}
\varphi_1(x,y,z,t)= x+y-(z^2(1-\frac{1}{2}z^2)+t^2)\\
\varphi_2(x,y,z,t)= x-y-(z^2(1-\frac{1}{2}z^2)-t^2)\\
\varphi_3(x,y,z,t)= z,\\ \varphi_4(x,y,z,t)=t\end{cases}
\]
and check that $F(x,y,z,t)= G(\varphi_1, \varphi_2, \varphi_3, \varphi_4)$, where 
\begin{equation}\label{eq2'} G(x,y,z,t)= xy+ z^{8}+ t^{8}+ G'(x,y,z,t),\end{equation}
where $G'(x,y,z,t)$ is a polynomial of degree strictly greater than $1$ with respect to the weights \[\underline w=(1/2, 1/2, 1/8,1/8).\]
Since $G$ is a semi quasi-homogeneous polynomial of degree $1$ with respect to $\underline w$, and since no element of its Jacobian algebra has degree strictly greater than $1$ with respect to these weights, \[G\sim xy+ z^{8}+ t^{8}\] and $(P\in X)$ is a $cA_7$ point.  
The hypersurface $X\subset \mathbb{P}^4$ is a terminal and factorial quartic hypersurface, and hence is a Mori fibre space.

As in Section~\ref{family}, we embed $X$ as the complete intersection
\[ \begin{cases} 
\alpha= x_0(x_1+x_2) - (x_3^2+x_4^2)\\
\beta = x_0(x_1-x_2)-  (x_3^2-x_4^2)\\
\alpha \beta + 2 x_1^2(\alpha+ \beta)+ x_1^4+ x_2^4=0\end{cases}\subset \mathbb{P}= \mathbb{P}(1^4, 2^2).\]
We consider the weighted blowup $F\colon \mathcal F\to \mathbb{P}$, where $\mathcal F$ is the Picard rank $2$ toric variety $TV(I,A)$, where $I= (u,x_0)\cap (x_1,\cdots , x_4, \alpha, \beta)$ is the irrelevant ideal and $A$ is the action of $\mathbb C^*\times \mathbb C^*$ with weights:
\[ \left(\begin{array}{cccccccc}
u&x_0&x_3&x_4&x_1&x_2&\alpha &\beta\\
1& 0&-1&-1&-2&-2&-4&-4\\
0&1&1&1&1&1&2&2
\end{array}\right) \]

The proper transform of $X$ is 
\[ Z= \begin{cases} u^2\alpha= x_0(x_1+x_2) - (x_3^2+x_4^2)\\
u^2\beta = x_0(x_1-x_2)-  (x_3^2-x_4^2)\\
\alpha \beta + 2 x_1^2(\alpha+ \beta)+ x_1^4+ x_2^4=0\end{cases}= \left(\begin{array}{c}2\\2\end{array}\right) \cap \left(\begin{array}{c}2\\2\end{array}\right)\cap \left(\begin{array}{c}8\\4\end{array}\right)\subset \mathcal F\]

and we check that the restriction of $F$ to $Z$ is indeed a divisorial contraction $f\colon Z\to X$ with discrepancy $a=1$.  
The $2$-ray configuration on $\mathcal F$ is:
\[ \xymatrix{ 
& \mathcal F\ar[dl]_{F} \ar@{-->}[rr]^{\Phi} && \mathcal F^+  \ar[dr]^{G}&\\ 
\mathbb{P} &&&&\mathbb{P}^+  } \]
where $\Phi$ is a small map and $\mathcal F^+$ is the ample model of $L-(1+ \varepsilon)E$ and $G$ is a fibration morphism and $\mathbb{P}^+= \mathbb{P}(1,1,2,2)= \mathbb{P}_{x_1,x_2, \alpha, \beta}$ is the ample model $\Proj(n(L-2E))$ for suitable $n>\!\!>1$. 

The restriction of $\Phi$ to $Z$ is an isomorphism because the monomials $x_3^2, x_4^2$ appear in the first two equations, we still denote by $Z$ its image. The restriction of $G$ to $Z$ is a conic bundle over the quartic surface $S_4\subset \mathbb{P}(1,1,2,2)$ defined by the third equation of $Z$. 
\end{exa}

The results of Section~\ref{family} and Example~\ref{cA7} thus show:

\begin{thm}
There are examples of non-rigid terminal factorial quartic hypersurfaces with a singular point of type $cA_n$ for all possible $n\geq 2$. 
\end{thm}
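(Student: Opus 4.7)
The plan is to assemble the theorem as an immediate consequence of the results established earlier in the paper. By Proposition~\ref{pro1}, the local analytic type of a $cA_n$ point on a terminal factorial quartic hypersurface $X\subset\mathbb{P}^4$ must satisfy $n\leq 7$; the conjecturally rigid case $n=1$ is excluded by the hypothesis $n\geq 2$. Thus the ``possible $n\geq 2$'' in the statement means exactly $n\in\{2,3,4,5,6,7\}$, and I only need to exhibit a non-rigid terminal factorial quartic $3$-fold with a $cA_n$ point for each such $n$.

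For each $n$ with $2\leq n\leq 6$, I would invoke Theorem~\ref{main}: the hypersurface $X^{i,j}\subset \mathbb{P}^4$ corresponding to one of the pairs $(i,j)$ listed in Table~1 is, by the discussion preceding that theorem, terminal and factorial with a single $cA_n$ singularity at $P$, and Table~2 exhibits at least one Sarkisov link initiated by a divisorial contraction $f\colon Z\to X^{i,j}$ with centre at $P$. The existence of such a link whose other end is a genuine Mori fibre space (rather than a self-map) shows that $X^{i,j}$ is non-rigid. One must be slightly careful to pick, for each $n$, a row of Table~2 that is not the ``bad link'' and not the quadratic involution marked $(\star)$: for $n=3$ use the $(2,2,1,1)$ extraction yielding a $dP_2$-fibration, and for the other values of $n\in\{2,4,5,6\}$ any of the listed links suffice.

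For $n=7$, Table~1 provides no entry, so I would instead appeal to Example~\ref{cA7}: the explicit quartic
\[X=\{(x_0x_1-x_3^2)^2-(x_0x_2-x_4^2)^2+x_0x_1^3-x_1^2x_3^2+x_1^4+x_2^4=0\}\subset \mathbb{P}^4\]
is verified there to be terminal and factorial, its singular locus consists of a single $cA_7$ point at $P=(1{:}0{:}0{:}0{:}0)$, and the weighted blowup $F\colon \mathcal F\to \mathbb{P}(1^5,2^2)$ restricts on the complete-intersection re-embedding of $X$ to a divisorial extraction whose $2$-ray configuration descends to a Sarkisov link ending in a conic bundle over the quartic surface $S_4\subset \mathbb{P}(1,1,2,2)$. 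This exhibits the required non-rigid example for $n=7$.

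Combining these two inputs covers the entire allowed range $2\leq n\leq 7$ and completes the proof. The only delicate point in this synthesis is checking that the ``Mori fibre space'' ends of the links in Table~2 and in Example~\ref{cA7} are genuinely distinct from $X$ (so that non-rigidity follows), which amounts to observing that the targets $Y/T$ listed in Table~2 and the conic bundle structure in Example~\ref{cA7} are of the wrong type (different Picard rank, different base, or different anticanonical degree) to be square birational to $X\to\mathrm{Spec}\,\mathbb{C}$; all other verifications (terminal, factorial, singularity type, validity of the link) have already been carried out in Sections~\ref{family} and \ref{otherex}.
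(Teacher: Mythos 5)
Your proposal is correct and follows the paper's own argument exactly: the theorem is deduced by combining Theorem~\ref{main} (covering $2\leq n\leq 6$ via the links in Table 2) with Example~\ref{cA7} (covering $n=7$), with Proposition~\ref{pro1} guaranteeing that $n\leq 7$ exhausts all possible values. Your extra care in steering around the ``bad link'' and the quadratic involution $(\star)$ for $n=3$, and in noting that the target Mori fibre spaces are not square equivalent to $X$, makes explicit checks that the paper leaves implicit but adds nothing substantively different.
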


Each of the Sarkisov links we have constructed so far is initiated by a divisorial contraction with centre at a $cA_n$ point and discrepancy $a=1$. We now construct an example with higher discrepancy. 
\begin{exa}\label{antiflip}
We construct a Sarkisov link initiated by a divisorial contraction with centre at a $cA_2$ point. Unlike the previous examples, which only involved flips and flops, the Sarkisov link in this example involves an antiflip.  

Consider the terminal, factorial quartic hypersurface
\[ X=\{ x_0x_1(x_0x_2-x_4^2) + x_0(x_2^3+x_3^3)- x_1^4+ x_2^4+ x_3^4\}\subset \mathbb{P}^4. \]
Then $\Sing X=\{ P, P_4\}$, and, setting $x_0=1$ in the expression above, we see that:
\[ (P\in X) \sim 0\in \{ xy+z^3+t^6+ (\mbox{higher order terms})=0\},\]
so that $P$ is a $cA_2$ point and a divisorial contraction with centre at $(P\in X)$ has discrepancy $1$ or $2$ by Theorem~\ref{cAn}. We embed $X$ as
\[ X= \begin{cases}\alpha= x_0x_1\\\beta = x_0x_2-x_4^2\\\alpha \beta x_0(x_2^3+x_3^3)- x_1^4+ x_2^4+ x_3^4=0\end{cases}\subset \mathbb{P}(1^5, 2^2).\] 

If we consider a divisorial contraction with discrepancy $1$, we obtain a link of the same form as above. We now consider the case when $f$ is a divisorial contraction with discrepancy $2$. Then, by Theorem~\ref{cAn}, the weights of $(\alpha, \beta, x_3,x_4)$ are either $(1,5,2,1)$ or $(3,3,2,1)$. We consider the second case, and as in Lemma~\ref{weights}, we show that $\mathcal F$ is a toric variety $TV(I,A)$, where 
\[ A= \left(\begin{array}{cccccccc}
u&x_0 &x_1&x_2&x_3 &x_4&\alpha &\beta \\
1&0 &-p&-q&-2&-1&-3&-3\\
0&1&1&1&1&1&2&2
\end{array}\right), \mbox{ for } (p,q)= (2,2), (2,3) \mbox{ or } (3,2).
\]
We consider the case when $(p,q)=(2,2)$, so that, after re-ordering, 
\[ A= \left(\begin{array}{cccccccc}
u&x_0 &x_4&\alpha & \beta & x_1 &x_2&x_3  \\
1&0 &-1&-3&-3&-2&-2&-2\\
0&1&1&2&2&1&1&1
\end{array}\right),\]
and the proper transform of $X$ is given by the equations:
\[ 
\begin{cases} u\alpha = x_0x_1\\ u\beta = x_0x_2-x_4^2\\\alpha \beta x_0(x_2^3+x_3^3)+ u^2(-x_1^4+ x_2^4+ x_4^2)=0\end{cases}.\]
As in the proof of the $cA_2$ case in Theorem~\ref{main}, the first equation is in the ideal $(u,x_0)$, we re-embed $Z$ so that it follows the ambient $2$-ray configuration: we introduce an unprojection variable $s$ such that:
\[ \begin{cases} su=x_1\\ sx_0= \alpha \end{cases}, \quad s\in \left(\begin{array}{c}-3\\1\end{array}\right).\]
We then see that the variables $x_1$ and $\alpha$ are redundant so that the expression of $Z$ is 
\[ 
\begin{cases} 
u\beta = x_0x_2-x_4^2\\
x_0(s\beta+ x_3^3+x_2^3)+ u^2((su)^4+ x_2^4+x_3^4)= 0.
\end{cases}
\]
As above, since the second equation is in the ideal $(u^2, x_0)$, we need to introduce a second unprojection variable $\eta$ such that 
\[ 
\begin{cases} 
\eta u^2= s\beta+ x_3^3+x_2^3\\
\eta x_0= (su)^4+ x_2^4+x_3^4
\end{cases} , \quad \eta \in \left(\begin{array}{c}-8\\3\end{array}\right).\]
We now get that $Z$ is the complete intersection 
\[ 
\begin{cases} 
u\beta = x_0x_2-x_4^2\\\eta u^2= s\beta+ x_3^3+x_2^3\\
\eta x_0= (su)^4+ x_2^4+x_3^4
\end{cases} \subset \mathcal F,\]
where $\mathcal F$ denotes the toric variety $TV(I,A)$, for 
\[ A= \left(\begin{array}{cccccccc}
u&x_0 &x_4 & \beta & x_2 &x_3&\eta &s  \\
1&0 &-1&-3&-2&-2&-8&-3\\
0&1&1&2&1&1&3&1
\end{array}\right).\]

The $2$-ray configuration on $\mathcal F$ is:
\[ \xymatrix{ 
& \mathcal F\ar[dl]_{F} \ar@{-->}[rr]^{\Phi_1} && \mathcal F_1 \ar@{-->}[rr]^{\Phi_2}&&\mathcal F_2 \ar@{-->}[rr]^{\Phi_3}&&\mathcal F_3 \ar[dr]^{G}&\\ 
\mathbb{P} &&&&&&&&\mathbb{P}^+ & }, \]
\begin{itemize}
\item[-] $\Phi_1\colon \mathcal F \dashrightarrow \mathcal F_1$ is a small map, and $\mathcal F_1$ is the ample model of $L-(1+ \varepsilon)E$,
\item[-] $\Phi_2\colon \mathcal F_1 \dashrightarrow \mathcal F_2$ is a small map, and $\mathcal F_2$ is the ample model of $2L-(3+ \varepsilon)E$, 
\item[-] $\Phi_3\colon \mathcal F_2 \dashrightarrow \mathcal F_3$ is a small map, where $\mathcal F_3$ is the ample model of $L-(2+ \varepsilon)E$,
\item[-] $G\colon \mathcal F_3\to \mathbb{P}^+$ is a divisorial contraction, and $\mathbb{P}^+$ is the ample model of $8L-3E$. 
\end{itemize}

We study the restriction of this $2$-ray configuration to $Z$. 
Since the monomial $x_4^2$ appears in one of the equations of $Z$, the restriction ${\Phi_1}_{\vert Z}$ is an isomorphism. We still denote by $Z$ its image. 

We now prove that ${\Phi_2}_{|Z}\colon Z\dashrightarrow Z_2$ is a small birational map. Since it is a $K$-positive contraction, we also need to prove that $Z_2$ has terminal singularities. 
The exceptional locus of ${\Phi_2}_{\vert Z}$ is at most $1$-dimensional, as the only pure monomial in $u, x_0, x_4$ that appears in the equations of $Z$ is $x_4^2$.The exceptional locus of $({\Phi_2}_{|Z})^{-1}$ is also at most $1$-dimensional, as pure monomials in $x_2, x_3, \eta , s$ appear in two of the three equations defining $Z$. 
 
In order to study ${\Phi_2}_{|Z}$, we localise near $P_{\beta}$. Setting $\beta=1$, we use the first two equations to eliminate the variables $u$ and $s$, so that  $X\cap \{\beta=1\}$ is the hypersurface defined by the third equation:
\[ \{\eta x_0= x_2^4+ x_3^2+ (\eta u^2-x_3^3-x_2^3)^4(x_0x_2-x_4^2)^4=0\}\]

As above, under the change of coordinates associated to \[\left(\begin{array}{cc} 1 &-1\\-2&3\end{array}\right)\in \mbox{SL}_2(\mathbb Z),\] the action becomes:
\[ 
  \left(\begin{array}{cccccccc}
u &x_0 &x_4 & \beta & x_2 &x_3&\eta &s  \\
-1&-1 &0&1&1&1&5&2\\
2&3&1&0&-1&-1&-7&-3
\end{array}\right),\]
so that once the variables $u, s$ and the second and third equations defining $Z$ have been eliminated, we are left with the inverse of a flip of the hypersurface defined by the first equation of $Z$ in $\mathbb C_{\eta, x_2, x_3, x_0,x_4}$, which is, in the notation of \cite{Br99}, of the form $(7,1,1,-3,-1; 4)$. The map ${\Phi_2}_{\vert Z}$ is thus an antiflip between $3$-folds with terminal singularities. The exceptional locus of ${\Phi_2}_{\vert Z}$ is empty because the equation of $X\cap U_{\beta}$ has no pure monomial in $x_0, x_4$; the exceptional locus of $({\Phi_2}_{\vert Z})^{-1}$ consists of $4$ lines  \[\{ x_2^4+ x_3^4=0\} \subset \mathbb{P}(1,1,7)=\mathbb{P}_{x_2, x_3, \eta}.\]   
By the construction of $f$, the basket of $Z$ consists of two $[3,1]$ singularities, one of which lies over $P_{\beta}$. From \cite{Br99}, the basket of $Z_2$ consists of one $[7,1]$ singular point and one $[3,1]$ singular point.

The restriction ${\Phi_3}_{\vert Z_2}$ is an isomorphism because $x_3^3+x_2^3$ appears in the second equation defining $Z$ and $x_3^4+x_2^4$ in the third. 
We still denote $Z_2$ its image by ${\Phi_3}_{\vert Z_2}$.

The restriction of $G$ to $Z_2$ is a morphism because $Z_2\cap \{ s=0\}$ is a prime divisor. 
As above, under the change of coordinates associated to \[\left(\begin{array}{cc} 1 &-2\\-1&3\end{array}\right)\in \mbox{SL}_2(\mathbb Z),\] the action becomes:
\[ 
  \left(\begin{array}{cccccccc}
u &x_0 &x_4 & \beta & x_2 &x_3&\eta &s  \\
-1&-2 &-1&-1&0&0&2&1\\
1&3&2&3&1&1&1&0
\end{array}\right),\]
so that $\mathbb{P}^+= \mathbb{P}(1^4, 2, 3^2)$ with coordinates $\eta, s^3u,s^2x_2, s^2x_3$ (degree $1$), $s^5x_4$ (degree $2$) and $s^7\beta, s^8x_0$ (degree $3$).

The proper transform of $Z$ is 
\[ Y =\begin{cases}
(s^3u)(s^7\beta)= (s^8x_0)(s^2x_2)- (s^5x_4)^2\\
s^7\beta= (s^3u)^2\eta- (s^2x_3)^3-(s^2x_2)^3\\
(s^8x_0)\eta= (s^3u)^4+(s^2x_2)^4+(s^2x_3)^4
\end{cases}\subset \mathbb{P}^+,
\]
so that the variable $s^7\beta$ and the second equations are redundant. The $3$-fold $Y$ is a complete intersection of two quartic hypersurfaces $Y_{4,4}\subset \mathbb{P}(1^4,2,3)$. This is a Fano $3$-fold of index $1$, and is the Fano variety number 16204 in \cite{GRDB}. 

Setting $\eta=1$ shows that $Y_{4,4}\subset \mathbb{P}^+$ as a singular point of type $cE_7$ at $P_{\eta}$, and the contraction $g\colon Z_2\to Y$ is a divisorial contraction with centre at $P_{\eta}$. The contraction $g$ has discrepancy $2$, and the basket of $Z_2$ shows that $g$ is, in Kawakita's notation \cite{Kawk03}, a contraction of type I. 

In summary, we have constructed a Sarkisov link from a terminal factorial quartic hypersurface $X$ with a $cA_2$ point $(P\in X)$ to a complete intersection $Y_{4,4}\subset \mathbb{P}(1^4, 2, 3)$ with a $cE_7$ point $(Q\in Y)$, which is of the form:

\begin{minipage}[c]{4cm}
\[ \xymatrix{ Z\ar@{-->}[rr]^{\Phi} \ar[d]_f && Z^+\ar[d]^g\\
X&& Y_{4,4}}\]
\end{minipage}
\begin{minipage}{\textwidth}
\begin{itemize}
\item[-] $f$ is a discrepancy $2$ divisorial contraction with centre at $P$, 
\item[-] $\varphi= \varphi_3\circ\varphi_2\circ\varphi_1$, with $\varphi_1, \varphi_3$ isomorphisms and $\varphi_2$ an antiflip, 
\item[-] $g$ is a discrepancy $2$ divisorial contraction with centre at $Q$.
\end{itemize}
\end{minipage}
\end{exa}

\subsection{Examples with $cD$ and $cE$ singularities}\label{cD,cE}

We now give examples of non-rigid factorial quartic hypersurfaces with singular points that are not of type $cA$. 
The study of the pliability of quartics with $cD_m$ and $cE_{6,7,8}$ singular points is complicated by the fact that, unlike in the $cA_n$ case, there is no classification of the germs of divisorial extractions $f\colon Z \to X$ with centre at a $cD$ or $cE$ point.  We only know the germs of a few explicit divisorial extractions with these centers: those that are weighted blowups with discrepancy $1$. The following examples are non-rigid quartic hypersurfaces with a $cD$ or $cE$ singular point.

\begin{exa}\label{cD4}
Let \[X= \{ x_0^2x_1^2- x_0(x_2^3+ x_3^3+ x_4^3)+ f_4(x_1, x_2, x_3, x_4)= 0\}\subset \mathbb{P}^4\] be a quartic hypersurface, where $f_4$ is a general homogeneous polynomial of degree $4$ such that  
$ \{ x_2^3+ x_3^3+ x_4^3= f_4(0, x_2, x_3, x_4)=0\}\subset \mathbb{P}^2$
consists of $12$ points. By generality of $f_4$, $X$ is factorial and $\Sing X$ consists of a single point $(1{:}0{:}0{:}0{:}0)$ that is of type $cD_4$. The restriction of the weighted blowup $\mathcal F\to \mathbb{P}$ with centre at $P$ and weights $(2,1,1,1)$ is a divisorial contraction with discrepancy $1$. It initiates a Sarkisov link that consists of a flop in $12$ lines over the $12$ points \[\{ x_2^3+ x_3^3+ x_4^3= f_4(0, x_2, x_3, x_4)=0\}\subset \mathbb{P}^2_{x_2, x_3, x_4}\] followed by a divisorial contraction to a $1/2(1,1,1)$ point on a quasismooth intersection of a quartic and a cubic $Y= Y_{3,4}\subset \mathbb{P}(1^4, 2^2)$. Note that the Fano $3$-fold $Y$ is not a general $Y_{3,4}\subset \mathbb{P}(1^4, 2^2)$. Denoting by $x_1, \cdots, x_4$ the coordinates of weight $1$ and by $y_1, y_2$ those of weight $2$, the equation of $Y$ is of the form
\[ Y\colon \begin{cases} y_1x_1-( x_2^3+ x_3^3+x_4^3)=0\\ y_1y_2-y_1^2-f_4(x_1, \cdots, x_4)=0\end{cases} \subset \mathbb{P}(1^4, 2^2).\]

Note the similarity of this link to the link between a quartic $X^\prime\subset \mathbb{P}^4$ with a $cA_2$ point and a general $Y_{3,4}\subset \mathbb{P}(1^4, 2^2)$ studied in \cite{CM04}. In our case, $Y$ is a special quasi-smooth model in its family, and we conjecture that as in \cite{CM04} $X$ is birigid, i.e.~$\mathcal P(X)= \{ [X], [Y]\}$.  

\end{exa}
\begin{exa}\label{cD5} 
Let \[X=  \{ x_0^2x_1^2+ x_0x_2^2x_3+ x_1^4+ x_2^4+ x_3^4+ x_4^4= 0\}\subset \mathbb{P}^4;\] $\Sing X$ consists of a single point $(1{:}0{:}0{:}0{:}0)$ that is of type $cD_5$.
The restriction of the weighted blowup $\mathcal F\to \mathbb{P}$ with centre at $P$ and weights $(2,1,2,1)$ is a divisorial contraction with discrepancy $1$. It initiates a Sarkisov link that consists of a flop in $4$ lines lying above $\{ x_2^4+ x_4^4=0\}\subset \mathbb{P}^1_{x_2, x_4}$ followed by a del Pezzo fibration of degree $2$. 
\end{exa}

\begin{exa}\label{cE6}{\bf (Singular points of $cE$ type)}

In the same way, let \[X=  \{ x_0^2x_1^2+ x_0x_2^3+ x_1^4+ x_2^4+ x_3^4+ x_4^4= 0\}\subset \mathbb{P}^4;\] $\Sing X$ consists of a single point $(1{:}0{:}0{:}0{:}0)$ that is of type $cE_6$.
The restriction of the weighted blowup $\mathcal F\to \mathbb{P}$ with centre at $P$ and weights $(2,2,1,1)$ is a divisorial contraction with discrepancy $1$. It initiates a Sarkisov link that consists of a flop in $4$ lines lying above $\{ x_3^4+ x_4^4=0\}\subset \mathbb{P}^1_{x_3, x_4}$ followed by a del Pezzo fibration of degree $2$.

Similarly, if 
\begin{eqnarray*} Y= \{ (x_0x_1-x_4^2)^2+x_0x_2^3+x_2x_3^3+x_1^4+ x_2^4=0\} \subset \mathbb{P}^4 \mbox{ and }\\
 Z= \{ (x_0x_1-x_3^2-x_4^2)^2+x_0x_2^3+x_0x_1^2(x_3+x_4)+x_1^4+x_2^4=0\}\subset \mathbb{P}^4\end{eqnarray*}
then \[ \Sing Y= \Sing Z= \{ P\}= (1{:}0{:}0{:}0{:}0)\]
and $(P\in Y)$ is of local analytic type $cE_7$, while $(P\in Z)$ is of local analytic type $cE_8$. The $3$-folds $Y$ and $Z$ are non-rigid. 
Indeed, embed $Y$ and $Z$ as complete intersections 
\[ Y = \begin{cases} \alpha= x_0x_1-x_4^2\\
\alpha^2  + x_0x_2^3+x_2x_3^3+x_1^4+ x_2^4=0\end{cases} \subset \mathbb{P}_1=\mathbb{P}(1^5, 2)\]
where the variable of degree $2$ is $\alpha$, and  
\[ Z= \begin{cases}\beta= x_0x_1-x_3^2-x_4^2\\ \beta^2 +x_0x_2^3+x_0x_1^2(x_3+x_4)+x_1^4+x_2^4=0\end{cases} \subset \mathbb{P}_2= \mathbb{P}(1^5, 2)
\]
where the variable of degree $2$ is $\beta$. Then consider the rank $2$ toric varieties $\mathcal F_1$ and $\mathcal F_2$ associated to 
\begin{equation*}
\left( \begin {array}{ccccccc}
 u & x_0& x_1&x_2 & x_3&x_4& \alpha \\
1 & 0 & -2 & -2 &-2 &-1& -3\\
 0 & 1 & 1& 1& 1& 1 &2 \end{array}
\right) \mbox{ and } 
\left( \begin {array}{ccccccc}
 u & x_0& x_1&x_2 & x_3&x_4& \beta\\
1 & 0 & -3 & -2 &-1 &-1 & -3\\
 0 & 1 & 1& 1& 1& 1 &2 \end{array}
\right)
\end{equation*}
The restrictions of the weighted blowup $\mathcal F_i\to \mathbb{P}_i$ with centre at $P$ are divisorial contractions $f_1\colon \widetilde Y\to Y$ and $f_2\colon \widetilde Z \to Z$. These both have discrepancy $1$, and globalise the germs of weighted blowups with center at a $cE$ point classified in \cite[Theorem 3.2]{Mar96}. Further, $f_1$ initiates a Sarkisov link between $Y$ and a conic bundle, while $f_2$ initiates a link between $Z$ and an index $2$ Fano $3$-fold $Y_{4,6}\subset \mathbb{P}(1^2, 2^2, 3^2)$ which is the Fano variety number 40369 in \cite{GRDB}. 
\end{exa}


\bibliographystyle{amsalpha}

\bibliography{biblio}
\end{document}